\newtheorem{theorem}{Theorem}[section]
\newtheorem{lemma}{Lemma}[section]
\newtheorem{definition}{Definition}[section]
\newtheorem{proposition}{Proposition}[section]
\newcommand{\E}{\mathbb{E}}
\newcommand{\R}{\mathbb{R}}
\newcommand{\I}{\mathbb{I}}
\providecommand{\keywords}[1]{\small\textbf{\textbf{Keywords:}} #1}
\providecommand{\MSC}[1]{\small \textbf{\textbf{MSC 2020:}} #1}
\title{Weak convergence of stochastic integrals}
\date{}
\author[]{Xavier Bardina\thanks{Corresponding author. \\Both authors are supported by the grant PID2021-123733NB-I00 from SEIDI, Ministerio de Econom\'ia y Competitividad.}}
\author[]{Salim Boukfal}
\affil[]{Departament de Matemàtiques, Universitat Autònoma de Barcelona, Cerdanyola del Vallès, Spain}
\affil[]{xavier.bardina, salim.boukfal@uab.cat}
\begin{document}

\maketitle

\begin{abstract}
    In this paper we provide sufficient conditions for sequences of stochastic processes of the form $\int_{[0,t]} f_n(u) \theta_n(u) du$, to weakly converge, in the space of continuous functions over a closed interval, to integrals with respect to the Brownian motion, $\int_{[0,t]} f(u)W(du)$, where $\{f_n\}_n$ is a sequence satisfying some integrability conditions converging to $f$ and $\{\theta_n\}_n$ is a sequence of stochastic processes whose integrals $\int_{[0,t]}\theta_n(u)du$ converge in law to the Brownian motion (in the sense of the finite dimensional distribution convergence), in the multidimensional parameter set case.
\end{abstract}

\keywords{Brownian sheet, stochastic integral, random walk, Poisson process, Kac-Stroock, weak convergence}

\MSC{60F05, 60F17, 60G50, 60G60, 60H05}

\section{Introduction}

In the literature one can find several examples of processes that approximate the Brownian motion (when dealing with one parameter processes) or the Brownian sheet (when dealing with several parameters). Among these examples, there are very well-known processes like the random walk or the Kac-Stroock process (see, for instance, Corollary 1 in \cite{wichurarandomwalk} or \cite{multiparameterpoisson}), which have the particularity that are processes of bounded variation and thus, it makes sense to talk about integrals with respect to such processes in a pathwise sense by using the usual Lebesgue-Stieltjes theory of integration. It is then natural to ask if these integrals approximate the stochastic integral with respect to the Brownian motion/sheet defined in the Itô sense (for the one parameter case) or as in \eqref{integral with respect to brownian sheet} (for the multidimensional parameter set case). 

More precisely, for fixed $0 < T$, let $\{\zeta_n\}_{n \in \mathbb{N}}$, $\zeta_n = \{\zeta_n(t) \colon t \in [0,T]\}$, be a sequence of continuous processes weakly converging to a Brownian motion $W = \{W(t) \colon t \in [0,T]\}$ in the space of continuous functions over $[0,T]$, $\mathcal{C}([0,T])$, and whose sample paths are of bounded variation and let $\{Y_n\}_{n \in \mathbb{N}}$, $Y_n = \{Y_n(t)\colon t \in [0,T]\}$, be a sequence of processes converging in some sense to another process $Y=\{Y(t)\colon t \in [0,T]\}$. Is it then true that the processes
\begin{equation*}
    X_n \coloneqq \left\{X_n(t) = \int_{0}^t Y_n(u) d\zeta_n(u) \colon t \in [0,T] \right\},
\end{equation*}
converge (in some sense) to
\begin{equation*}
    X \coloneqq \left\{X(t) = \int_{0}^t Y(u) dW(u) \colon t \in [0,T] \right\}
\end{equation*}
as $n$ approaches infinity? 

This problem has already been addressed (when a single parameter is taken into account) in, for instance, \cite{articlefrances} and \cite{kurtzprotter} by considering càdlàg processes $Y_n$, $Y$ and with $\{\zeta_n\}_{n \in \mathbb{N}}$ a sequence of càdlàg semimartingales such that the joint law of $(Y_n, \zeta_n)$ weakly converges, in the space of càdlàg functions, to $(Y, \zeta)$, where $\zeta = \{\zeta(t)\colon t \in [0,T]\}$ is some càdlàg process (ultimately, a semimartingale) for which the stochastic integral of $Y$ with respect to $\zeta$ is well defined.\\
One can find some results in this direction as well, when the parameter set is of dimension $2$, in \cite{stochasticheat}, where the problem of convergence of stochastic integrals is used to give approximations to solutions of the stochastic heat equation when the driving noise is approximated in distribution.

Mostly motivated by the results seen in the latter, the purpose of this paper will be to provide similar results in the multidimensional parameter set case by considering the random walk and the Kac-Stroock processes as approximating sequences, for which we have explicit expressions of $\zeta_n$, and by replacing the processes $Y_n$, $n \in \mathbb{N}$, by functions satisfying some integrability condition, which we shall denote by $f_n$, converging to some other function $f$.

The article is organized in the following way, Section \ref{sec:preliminaries} is devoted to introduce the involved processes and some preliminary results. In Section \ref{sec:main result} we state and prove the main result. Finally, subsections \ref{sec:donsker} and \ref{sec:kac-stroock} are devoted to check that a couple of families of processes verify the hypotheses of the main result.

\section{Preliminaries}\label{sec:preliminaries}

In this section we shall provide the main definitions and tools we will be working with.

Let $d \geq 1$ and consider $[0,T] = \prod_{i=1}^d [0,T_i] \subset \R_+^d$, $T=(T_1,...,T_d) \in \R_+^d$, with the usual partial order (total in the case $d=1$). For $s = (s_1,...,s_d), t=(t_1,...,t_d) \in \R^d$, $s < t$, we write $(s,t] = \prod_{i=1}^d (s_i, t_i]$ (and analogously for open and closed rectangles) and denote by $\Delta_s X (t)$ the increment of the process over the rectangle $(s,t]$.

Let $(\Omega, \mathscr{F}, Q)$ be a complete probability space and let $\{\mathscr{F}_t \colon t \in [0,T]\}$ be a family of sub-$\sigma$-fields of $\mathscr{F}$ such that $\mathscr{F}_s \subset \mathscr{F}_t$ if $s \leq t$. For fixed $t \in [0,T]$, we also define $\mathscr{F}_t^T = \bigvee_{i=1}^d \mathscr{F}_{T_1,...,T_{i-1}, t_i, T_{i+1},...,T_d}$.

To define the Brownian sheet and the stochastic integral with respect to such process, we will make use of the isonormal Gaussian process over a real separable Hilbert space $H$ with inner product $\langle \cdot , \cdot \rangle_H$.

\begin{definition}
    We say that a stochastic process $W = \{W(h) \colon h \in H\}$ defined in a complete probability space is an isonormal Gaussian process if it is a centered Gaussian process with covariance function $\text{Cov}(f,g) = \langle f, g \rangle_H$ for all $f,g \in H$. 
\end{definition}

From now on, we shall take $H=L^2([0,T])$ with the usual inner product. A Brownian sheet (or $d$-parameter Wiener process) is then defined as the process $\Tilde{W} = \{\Tilde{W}(t) \colon t \in [0,T] \}$ with
\begin{equation*}
    \Tilde{W}(t) = W\left(\I_{(0,t]}\right),
\end{equation*}
where $\I_A$ is the indicator function of the set $A \subset \R^d$.

For a given function $f \in L^2([0,T])$ and $t \in [0,T]$, we then define the Wiener integral of $f$ with respect to the Brownian sheet over $[0,t]$ as $W(f \I_{[0,t]})$ and denote it by
\begin{equation}\label{integral with respect to brownian sheet}
    W(f \I_{[0,t]}) = \int_{[0,t]} f(u) \Tilde{W}(du).
\end{equation}
To simplify the notation, we will write $\int_{[0,t]} f(u) {W}(du)$ instead of $\int_{[0,t]} f(u) \Tilde{W}(du)$.

One can easily check (via Kolmogorov's continuity Theorem), that the Brownian sheet and the integral of an $L^2([0,T])$ function with respect to it have a continuous version, so, when talking about these objects, we will be talking about the continuous versions. 

We now introduce the two approximating sequences for the Brownian sheet, the random walk and the Kac-Stroock process, which will be defined via the Donsker kernels and the Kac-Stroock kernels, respectively.

\begin{definition}
    Let $\{Z_k \colon k \in \mathbb{N}^d\}$ be a sequence of independent and identically distributed centered random variables with unitary variance, the Donsker kernels are the processes $\{\theta_n\}_{n \in \mathbb{N}}$ defined by
    \begin{equation}\label{donsker kernels}
        \theta_n(t) = n^{\frac{d}{2}} \sum_{k = (k_1,...,k_d) \in \mathbb{N}^d}Z_k \mathbb{I}_{[k-1,k)}(nt), \quad t \in [0,T],
    \end{equation}
    where $k-1 = (k_1 - 1,...,k_d - 1)$.
\end{definition}

The approximating sequence of random walks is then defined to be as the sequence of processes
\begin{equation}\label{continuous random walk}
    \zeta_n(t) = \int_{[0,t]} \theta_n(u)du = n^{-\frac{d}{2}} \sum_{k \leq [nt]} Z_k + n^{-\frac{d}{2}} \sum_{k \in \mathbb{N}^d} Z_k \left(\int_{[0,nt]\backslash [0,[nt]]} \I_{[k-1,k)}(u)du \right),
\end{equation}
where $[s] = ([s_1],...,[s_d])$ for $s \in \R_+^d$ and $[x]$ is the integer part of $x \in \R_+$. The reader might note that this is not exactly a random walk, but the multiparameter analogous to the corresponding linear interpolation in the one parameter case.

As stated in the introduction, in \cite{wichurarandomwalk}, Corollary 1, it is shown that the sums $n^{-\frac{d}{2}} \sum_{k \leq [nt]} Z_k$ weakly converge towards a Brownian sheet in the space of càdlàg functions as $n$ approaches infinity. For the sake of completeness, we will show in the Appendix that the processes $\zeta_n$, as defined in \eqref{continuous random walk}, converge towards the same process in the space of continuous functions over $[0,T]$, $\mathcal{C}([0,T])$.  

\begin{definition}
    A $d$-parameter càdlàg process $N_\mu = \{N_\mu(t) \colon t \in [0,T]\}$ is a Poisson process with intensity $\mu > 0$ if it is null on the axes and, for all $0 \leq s < t $, $\Delta_s N_\mu (t)$ is independent of $\mathscr{F}_s^T$ with a Poisson law of parameter $\mu \prod_{i=1}^d (t_i - s_i)$.  
\end{definition}

If we do not specify the filtration, it will be the one generated by the process itself, completed with the necessary null sets.

\begin{definition}
    The Kac-Stroock kernels are the processes $\{\theta_n\}_{n\in \mathbb{N}}$ defined by
    \begin{equation*}
        \theta_n(t) = n^{\frac{d}{2}} \left( \prod_{i=1}^d t_i \right)^{\frac{d-1}{2}} (-1)^{N_n(t)}.
    \end{equation*}
\end{definition}

Similarly to the case of the random walks, the Kac-Stroock processes will be given by $\zeta_n(t) = \int_{[0,t]} \theta_n(u) du$. It is shown, in \cite{multiparameterpoisson}, that these processes weakly converge, in the space $\mathcal{C}([0,T])$, towards a Brownian sheet as $n$ approaches infinity.

At this point, we observe that both approximating sequences treated in this paper (the random walk and the Kac-Stroock process), have a very specific form which allows us to easily formulate the multiparameter analogue of the weak convergence already studied in \cite{articlefrances} and \cite{kurtzprotter} and mentioned in the introduction.\\
More precisely, the aim of this paper will be to stablish the weak convergence of the processes
\begin{equation}
    \label{aproximadors}
   X_n \coloneqq \left\{X_n(t) = \int_{[0,t]} f_n(u) \theta_n(u) du \colon t \in [0,T] \right\}
\end{equation}
towards the process
\begin{equation}\label{wiener integral}
    X \coloneqq \left\{X(t) = \int_{[0,t]} f(u) W(du) \colon t \in [0,T] \right\}
\end{equation}
in the space $\mathcal{C}([0,T])$ as $n$ approaches infinity, where $\zeta_n$ weakly converges towards a Brownian sheet in $\mathcal{C}([0,T])$ and $f_n \to f$ in $L^2([0,T])$. 

Finally, we end this section by introducing the main tools used in order to prove the desired convergence in law. As it is customary, to prove such convergence, one needs to prove that the sequence of laws associated to the sequence of processes $X_n$ is tight in $\mathcal{C}([0,T])$ and that their finite dimensional distributions converge towards the ones of $X$. The first result (Theorem \ref{tightness Bickel-Wichura}) is a tightness criteria which is an immediate consequence of the results shown in \cite{bickelwichura}. The second one (Lemma \ref{lema de densitat}) is a general result that will be used to prove the convergence of the finite dimensional distributions.

\begin{theorem}\label{tightness Bickel-Wichura}
    Let $\{Y_n\}_{n \in \mathbb{N}}$ be a sequence of real valued continuous processes over $[0,T]$ vanishing along the axes. Suppose that there exist $\beta > 1 $, $\gamma > 0$ and finite nonnegative measures $\mu$ and $\{\mu_{n}\}_{n \in \mathbb{N}}$ on $[0,T]$  with continuous marginals such that $\mu_n$ weakly converges to $\mu$ and, for each $0 \leq s \leq t \leq T$, $n \in \mathbb{N}$,
    \begin{equation*}
        \E\left[ \left| \Delta_s Y_n(t) \right|^\gamma \right] \leq \left( \mu_n((s,t])  \right)^\beta.
    \end{equation*}
    Then the sequence of laws associated to the processes $\{Y_n\}_{n\in \mathbb{N}}$ is tight.
\end{theorem}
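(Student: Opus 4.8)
The plan is to reduce the statement to the tightness criterion of Bickel and Wichura in \cite{bickelwichura}. That criterion, as formulated there, gives tightness in the Skorohod space (and hence, since our processes are continuous and vanish on the axes, in $\mathcal{C}([0,T])$) provided one can control a suitable \emph{blocked} moment of the increments: there exist $\beta>1$, $\gamma>0$ and a single finite nonnegative measure $\nu$ on $[0,T]$ with continuous marginals such that, for every pair of neighbouring blocks determined by $(s,t]$,
\begin{equation*}
    \E\left[\left|\Delta_s Y_n(t)\right|^{\gamma_1}\left|\Delta_s Y_n(t')\right|^{\gamma_2}\right]\leq \left(\nu((s,t])\right)^{\beta},
\end{equation*}
uniformly in $n$, together with (in the $d\ge 2$ case) the analogous bound on single increments; the point being that the \emph{same} measure $\nu$ must work for all $n$. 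So the first step is to produce, out of the family $\{\mu_n\}_n$ and its weak limit $\mu$, one fixed dominating measure.

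First I would dispose of the uniformity in $n$. Since $\mu_n\Rightarrow\mu$ and all the measures are finite with continuous marginals, the total masses $\mu_n([0,T])$ are bounded, say by $M$; more importantly, weak convergence plus continuity of the limiting marginals upgrades to the statement that $\sup_{(s,t]}\left|\mu_n((s,t])-\mu((s,t])\right|\to 0$ (a multidimensional Pólya-type argument on rectangles, using that $\mu$ has no atoms on any hyperplane $\{u_i=c\}$). Hence for any $\varepsilon>0$ there is $n_0$ with $\mu_n((s,t])\le \mu((s,t])+\varepsilon$ for all $n\ge n_0$ and all rectangles. I would then fix a small cube $Q_\varepsilon$ of $\mu$-mass at most $\varepsilon$ in each coordinate near the origin and define a fixed measure
\begin{equation*}
    \nu \coloneqq \mu + \varepsilon\,\lambda_{[0,T]} + \sum_{n<n_0}\mu_n,
\end{equation*}
where $\lambda_{[0,T]}$ is Lebesgue measure on $[0,T]$ (so that $\nu$ has continuous marginals and $\nu((s,t])\ge\mu((s,t])+\varepsilon\ge\mu_n((s,t])$ for $n\ge n_0$, while $\nu\ge\mu_n$ trivially for $n<n_0$). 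Thus $\E[|\Delta_s Y_n(t)|^\gamma]\le(\mu_n((s,t]))^\beta\le(\nu((s,t]))^\beta$ for every $n$ and every $(s,t]$.

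The second step is to pass from the single-increment moment bound in our hypothesis to the product (neighbouring-blocks) moment bound required by \cite{bickelwichura}. For two blocks $(s,t]$ and $(s,t']$ sharing the face at level $s$ and lying in a common rectangle $(s,\tau]$, the Cauchy–Schwarz inequality gives
\begin{equation*}
    \E\left[\left|\Delta_s Y_n(t)\right|^{\gamma/2}\left|\Delta_s Y_n(t')\right|^{\gamma/2}\right]
    \le \left(\E\left[|\Delta_s Y_n(t)|^{\gamma}\right]\right)^{1/2}\left(\E\left[|\Delta_s Y_n(t')|^{\gamma}\right]\right)^{1/2}
    \le \left(\nu((s,t])\right)^{\beta/2}\left(\nu((s,t'])\right)^{\beta/2},
\end{equation*}
and since both $(s,t]$ and $(s,t']$ are contained in $(s,\tau]$ while their $\nu$-masses are disjointly added inside $\nu((s,\tau])$ (they only overlap on a $\nu$-null face), the right side is bounded by $\left(\nu((s,\tau])\right)^{\beta}$. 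This is exactly the form Bickel–Wichura need, now with exponents $\gamma_1=\gamma_2=\gamma/2$ whose sum $\gamma$ plays the role of the moment exponent and with $\beta>1$ unchanged; the single-increment bound needed in dimension $d$ is our hypothesis verbatim. Invoking their theorem then yields tightness.

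The main obstacle is the first step: replacing the moving family $\{\mu_n\}_n$ by one fixed measure without losing the exponent $\beta$. The subtlety is that a crude bound like $\mu_n((s,t])\le\mu_n([0,T])$ destroys the rectangle-mass decay that makes $(\nu((s,t]))^\beta$ a genuine modulus of continuity; one really needs the \emph{uniform} (not merely weak) convergence of $\mu_n((s,t])$ to $\mu((s,t])$ over all rectangles, which is where continuity of the marginals of the limit $\mu$ is essential, and one must handle the finitely many early indices $n<n_0$ separately by absorbing their (finitely many, finite) masses into $\nu$. Once the single dominating $\nu$ is in hand, the remainder is the routine Cauchy–Schwarz reduction above and a citation.
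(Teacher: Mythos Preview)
The paper does not prove this theorem at all: it is stated without proof as ``an immediate consequence of the results shown in \cite{bickelwichura}''. So there is no paper argument to compare your route against; your proposal is an attempt to supply the missing reduction. The Cauchy--Schwarz step (your ``second step'') is fine, and in fact for continuous processes vanishing on the axes Bickel--Wichura's moment criterion already works with the single-increment bound, so that step is harmless though not strictly needed.

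There is, however, a genuine gap in your ``first step''. You claim that with
\[
\nu \coloneqq \mu + \varepsilon\,\lambda_{[0,T]} + \sum_{n<n_0}\mu_n
\]
one has $\nu((s,t])\ge \mu((s,t])+\varepsilon$ for every rectangle $(s,t]$, and hence $\nu((s,t])\ge \mu_n((s,t])$ for $n\ge n_0$. But $\nu((s,t]) = \mu((s,t]) + \varepsilon\,\lambda((s,t]) + \sum_{n<n_0}\mu_n((s,t])$, and $\varepsilon\,\lambda((s,t])$ is \emph{not} bounded below by $\varepsilon$; it vanishes as the rectangle shrinks. So your proposed $\nu$ does not dominate the $\mu_n$ on small rectangles, which are precisely the ones that matter for a modulus-of-continuity bound. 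In fact, there is in general no finite measure $\nu$ with $\mu_n\le\nu$ for all $n$: take $d=1$, $[0,T]=[0,1]$, and $\mu_n$ uniform on $[0,1/n]$; then $\mu_n([0,1/n])=1$ forces $\nu([0,1/n])\ge 1$ for every $n$, which is impossible for a finite $\nu$ with continuous marginals.

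The correct way to pass from the $n$-dependent $\mu_n$ to the Bickel--Wichura conclusion is not to manufacture a dominating measure, but to use directly what their chaining argument needs: that
\[
\sup_{n\ge 1}\ \sup_{\substack{(s,t]\subset[0,T]\\ \max_i(t_i-s_i)\le\delta}} \mu_n((s,t]) \longrightarrow 0 \quad (\delta\to 0).
\]
Your P\'olya-type uniformity $\sup_{(s,t]}|\mu_n((s,t])-\mu((s,t])|\to 0$ (valid since $\mu$ has continuous marginals) gives this for $n\ge n_0$, and the finitely many $n<n_0$ are handled individually because each $\mu_n$ has continuous marginals. Feeding this uniform smallness into the Bickel--Wichura estimate (where the bound on the modulus involves $\bigl(\sup_{\text{small blocks}}\mu_n(\cdot)\bigr)^{\beta-1}\mu_n([0,T])$) yields tightness. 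So keep your uniform-convergence observation, drop the construction of $\nu$, and invoke \cite{bickelwichura} with the uniform-in-$n$ smallness of small-rectangle masses instead.
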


\begin{lemma}\label{lema de densitat}
    Let $(F, ||\cdot||)$ be a normed vector space and $\{J^n\}_{n \in \mathbb{N}}$ and $J$ be linear maps from $F$ to $L^1(\Omega)$. Assume there exists a positive constant $C$ such that, for any $f \in F$
    \begin{equation*}
        \sup_{n \geq 1} \E\left[ \left| J^n(f) \right| \right] \leq C ||f||,\quad \E\left[ \left| J(f) \right| \right] \leq C ||f||,
    \end{equation*}
    and that, for some dense subspace $D$ of $F$, it holds that $J^n(f)$ converges in law to $J(f)$, as $n$ tends to infinity, for all $f \in D$. Then, the sequence of random variables $\{J^n(f)\}_{n \in \mathbb{N}}$ converges in law to $J(f)$ for any $f \in F$.

    If, in addition, we have $\{f_n\}_{n \in \mathbb{N}} \subset F$ and $f \in F$ such that $f_n \to f$ in $(F,||\cdot ||)$ and 
    \begin{equation*}
        \E\left[ \left| J^n(f_n - f)  \right| \right] \leq C ||f_n - f||,
    \end{equation*}
    where $C>0$ is independent of $n$, $\{f_n\}_{n\in \mathbb{N}}$ and $f$, then the sequence $\{J^n(f_n)\}_{n \in \mathbb{N}}$ converges in law to $J(f)$.
\end{lemma}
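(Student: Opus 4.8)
The plan is to deduce the convergence in law on all of $F$ from the convergence on the dense subspace $D$ by a standard three-term estimate, using the uniform $L^1$ bounds to control the approximation errors. Throughout I would use the fact that, to prove a sequence of real-valued random variables converges in law to a limit, it suffices to show that $\E[\varphi(\cdot)]$ along the sequence converges to the corresponding expectation of the limit for every bounded $1$-Lipschitz function $\varphi \colon \R \to \R$.

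For the first assertion, fix $f \in F$, a test function $\varphi$ as above, and $\varepsilon > 0$, and choose $g \in D$ with $\|f - g\|$ to be specified. For every $n$,
\begin{align*}
    \left| \E[\varphi(J^n(f))] - \E[\varphi(J(f))] \right| &\leq \left| \E[\varphi(J^n(f))] - \E[\varphi(J^n(g))] \right| \\
    &\quad + \left| \E[\varphi(J^n(g))] - \E[\varphi(J(g))] \right| + \left| \E[\varphi(J(g))] - \E[\varphi(J(f))] \right|.
\end{align*}
By linearity of $J^n$ and $J$ together with the Lipschitz bound on $\varphi$, the first term is at most $\E[|J^n(f-g)|] \leq C\|f-g\|$, with $C$ independent of $n$, and the third is at most $\E[|J(f-g)|] \leq C\|f-g\|$. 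Choosing $g$ so that $2C\|f-g\| < \varepsilon/2$ fixes $g$; the hypothesis $J^n(g) \to J(g)$ in law then makes the middle term $< \varepsilon/2$ for all sufficiently large $n$. Hence $|\E[\varphi(J^n(f))] - \E[\varphi(J(f))]| < \varepsilon$ for all large $n$, and since $\varphi$ and $\varepsilon$ were arbitrary, $J^n(f) \to J(f)$ in law.

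For the second assertion, write $J^n(f_n) = J^n(f) + J^n(f_n - f)$. The first summand converges in law to $J(f)$ by the part just proved, while $\E[|J^n(f_n - f)|] \leq C\|f_n - f\| \to 0$ shows that $J^n(f_n - f) \to 0$ in $L^1(\Omega)$, hence in probability. Slutsky's theorem then gives $J^n(f_n) \to J(f)$ in law.

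The argument is entirely soft, so I do not expect a serious obstacle; the one point that genuinely matters is the uniformity of the constant $C$ in the hypotheses — in $n$ (so that $g$ may be fixed before letting $n \to \infty$), and for the last part also in the sequence $\{f_n\}$ and in $f$ — since it is exactly this uniformity that lets the approximation errors be controlled independently of the position in the limiting process. No tightness or measurability subtleties arise, as every random variable in play is real-valued.
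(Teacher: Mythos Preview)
Your proof is correct and follows essentially the same approach as the paper: a three-term triangle inequality with bounded Lipschitz test functions for the first part, and the observation that $J^n(f_n)-J^n(f)\to 0$ in $L^1$ for the second. The only cosmetic difference is that you invoke Slutsky's theorem explicitly where the paper simply notes that an $L^1$-vanishing perturbation does not affect the limiting law.
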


\begin{proof}
    For the first part, recall that a sequence of random variables $\{X_n\}_{n \in \mathbb{N}}$ converges in law to a random variable $X$ if, and only if, for any bounded Lipschitz function $g \colon \R \to \R$, 
    \begin{equation*}
        \E\left[ g(X_n) \right] \xrightarrow{n \to \infty} \E\left[g(X)  \right].
    \end{equation*}
    Thus, we shall see that, for any $\varepsilon > 0$, there is $n \in \mathbb{N}$ large enough such that
    \begin{equation}\label{limit jnf}
        \left| \E\left[ g\left(J^n (f) \right)\right] - \E\left[g\left(J(f) \right) \right]  \right| < \varepsilon,
    \end{equation}
    where $g$ is any Lipschitz function as before. Consider any $h \in D$ such that $||f-h|| < \frac{\varepsilon}{3 L_g C}$, where $L_g > 0$ is the Lipschitz constant of $g$, and apply the triangle inequality to obtain
    \begin{align*}
        \left| \E\left[ g\left(J^n (f) \right)\right] - \E\left[g\left(J(f) \right) \right]  \right| &\leq  \left| \E\left[ g\left(J^n (f) \right)\right] - \E\left[g\left(J^n(h) \right) \right]  \right|\\
        &+ \left| \E\left[ g\left(J^n (h) \right)\right] - \E\left[g\left(J(h) \right) \right]  \right|\\
        &+ \left| \E\left[ g\left(J (h) \right)\right] - \E\left[g\left(J(f) \right) \right]  \right|.
    \end{align*}
    Now observe that
    \begin{align*}
        \left| \E\left[ g\left(J^n (f) \right)\right] - \E\left[g\left(J^n(h) \right) \right]  \right| &\leq \E\left[ \left| g\left(J^n (f) \right) - g\left(J^n (h) \right)  \right| \right] \\
        &\leq L_g \E\left[ \left| J^n (f-h)  \right| \right]\\
        &\leq  L_g C ||f-h|| < \frac{\varepsilon}{3}
    \end{align*}
    Similarly, 
    \begin{align*}
        \left| \E\left[ g\left(J (f) \right)\right] - \E\left[g\left(J(h) \right) \right]  \right| \leq L_g C ||f-h|| < \frac{\varepsilon}{3}.
    \end{align*}
    Finally, given that $J^n (h)$ converges in law to $J(h)$ for $h \in D$, we have that, for $n$ large enough,
    \begin{equation*}
        \left| \E\left[ g\left(J^n (h) \right)\right] - \E\left[g\left(J(h) \right) \right]  \right| < \frac{\varepsilon}{3}.
    \end{equation*}
    Thus, for $n$ large enough, we obtain \eqref{limit jnf} as desired.

    For the second part, it suffices to show that the sequences $\{J^n(f_n)\}_{n \in \mathbb{N}}$ and $\{J^n(f)\}_{n \in \mathbb{N}}$ have the same limit in $L^1(\Omega)$ (and thus, the same limit in law) since, by the previous part, $J^n(f)$ converges in law to $J(f)$. But this is immediate from the additional hypothesis, since
    \begin{equation*}
        \E\left[ \left|J^n(f_n) - J^n(f) \right| \right] = \E\left[ \left|J^n(f_n - f) \right| \right] \leq C||f_n - f|| \xrightarrow{n \to \infty} 0.
    \end{equation*}
\end{proof}

\section{Statement and proof of the main result}\label{sec:main result}

This section is devoted to prove the convergence in distribution of the processes $X_n$ defined in \eqref{aproximadors} towards the process $X$ defined by \eqref{wiener integral}. As shown by the main result of this paper (Theorem \ref{main theorem}), it turns out that this problem can be reduced to computing the moments of the increments over rectangles of the processes $X_n$. 

\begin{theorem}\label{main theorem}
    Let $1 \leq q < \infty$ and suppose there is $m > 2q$ and a positive constant $C$ independent of $n$ such that
    \begin{equation}\label{increment moments}
        \E\left[\left| \int_{[0,T]} g_n(u) \theta_n(u)du \right|^m \right] \leq C \left( \int_{[0,T]} |g_n(u)|^{2q} du  \right)^{\frac{m}{2q}},
    \end{equation}
    for any sequence of functions $\{g_n\}_{n \in \mathbb{N}}\subset L^{2q}([0,T])$. Then, the processes $X_n$ defined by \eqref{aproximadors} converge in law, as $n$ approaches infinity, towards the process $X$ defined by \eqref{wiener integral} in $\mathcal{C}([0,T])$ whenever the sequence $\{f_n\}_{n \in \mathbb{N}} \subset L^{2q}([0,T])$ converges in $L^{2q}([0,T])$ to $f \in L^{2q}([0,T])$.
\end{theorem}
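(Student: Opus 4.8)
\emph{Plan and tightness.} The plan is to obtain the weak convergence $X_n\to X$ in $\mathcal{C}([0,T])$ in the standard two steps: tightness of the sequence of laws of $\{X_n\}_{n\in\mathbb{N}}$, followed by convergence of the finite-dimensional distributions of $X_n$ to those of $X$. For tightness, note that each $X_n$ is continuous and vanishes along the axes, so Theorem \ref{tightness Bickel-Wichura} applies once a moment bound for the increments over rectangles is available. For $0\le s\le t\le T$ write $\Delta_s X_n(t)=\int_{[0,T]}f_n(u)\I_{(s,t]}(u)\theta_n(u)\,du$ and apply hypothesis \eqref{increment moments} with $g_n=f_n\I_{(s,t]}$ to get
\[
\E\left[\left|\Delta_s X_n(t)\right|^m\right]\le C\left(\int_{(s,t]}|f_n(u)|^{2q}\,du\right)^{m/(2q)}=C\,\bigl(\mu_n((s,t])\bigr)^{m/(2q)},
\]
where $\mu_n$ is the finite measure on $[0,T]$ with density $|f_n|^{2q}$. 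Since $m/(2q)>1$, this is precisely the hypothesis of Theorem \ref{tightness Bickel-Wichura} with $\gamma=m$ and $\beta=m/(2q)$; the $\mu_n$ are absolutely continuous, hence have continuous marginals, and since $f_n\to f$ in $L^{2q}([0,T])$ one also has $|f_n|^{2q}\to|f|^{2q}$ in $L^1([0,T])$ (using $\bigl||a|^p-|b|^p\bigr|\le p\,|a-b|\,(|a|^{p-1}+|b|^{p-1})$, Hölder's inequality and the boundedness of $\|f_n\|_{L^{2q}}$), so $\mu_n$ converges weakly to the absolutely continuous measure $\mu$ with density $|f|^{2q}$. Thus $\{X_n\}_{n\in\mathbb{N}}$ is tight.

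\emph{Finite-dimensional distributions.} Fix $t^{(1)},\dots,t^{(k)}\in[0,T]$; by the Cramér--Wold device it is enough to show that $\sum_{j=1}^k a_j X_n(t^{(j)})$ converges in law to $\sum_{j=1}^k a_j X(t^{(j)})$ for all $a_1,\dots,a_k\in\R$. Put $\psi=\sum_{j=1}^k a_j\I_{[0,t^{(j)}]}$, a bounded step function. By linearity of the ordinary integral, $\sum_j a_j X_n(t^{(j)})=\int_{[0,T]}f_n(u)\psi(u)\theta_n(u)\,du$, and by linearity of the isonormal Gaussian process, $\sum_j a_j X(t^{(j)})=W(f\psi)=\int_{[0,T]}f(u)\psi(u)\,W(du)$. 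I would then invoke Lemma \ref{lema de densitat} with $F=L^{2q}([0,T])$, $J^n(g)=\int_{[0,T]}g(u)\psi(u)\theta_n(u)\,du$ and $J(g)=W(g\psi)$. The uniform bound $\sup_n\E|J^n(g)|\le C^{1/m}\|\psi\|_\infty\|g\|_{L^{2q}}$ follows from \eqref{increment moments} applied to $g\psi$; the bound $\E|J(g)|\le c\,\|\psi\|_\infty\|g\|_{L^{2q}}$ from the fact that $J(g)$ is centered Gaussian with variance $\|g\psi\|_{L^2}^2\le c'\|\psi\|_\infty^2\|g\|_{L^{2q}}^2$ (Hölder, since $[0,T]$ has finite measure); and the auxiliary estimate $\E|J^n(f_n-f)|\le C^{1/m}\|\psi\|_\infty\|f_n-f\|_{L^{2q}}$ is again \eqref{increment moments} — all constants being independent of $n$. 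As dense subspace I would take the step functions, which are dense in $L^{2q}([0,T])$: if $g$ is a step function then $g\psi$ is a step function $\sum_l b_l\I_{Q_l}$ over disjoint rectangles $Q_l$, so $J^n(g)$ is a fixed linear combination of increments of $\zeta_n=\int_{[0,\cdot]}\theta_n(u)\,du$ over the $Q_l$, hence of the values of $\zeta_n$ at the finitely many vertices of these rectangles, and $J(g)=W(g\psi)$ is the same linear combination of the corresponding increments of the Brownian sheet; therefore $J^n(g)\to J(g)$ in law follows from the standing assumption that the finite-dimensional distributions of $\zeta_n$ converge to those of the Brownian sheet, together with the continuous mapping theorem. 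Lemma \ref{lema de densitat} then gives $J^n(f_n)\to J(f)$ in law, i.e. the convergence of the linear combinations, and hence of the finite-dimensional distributions of $X_n$ to those of $X$.

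\emph{Conclusion and main obstacle.} Tightness together with convergence of finite-dimensional distributions yields $X_n\to X$ in law in $\mathcal{C}([0,T])$. The routine parts are the continuity and axis-vanishing of $X_n$, the $L^1$-convergence $|f_n|^{2q}\to|f|^{2q}$, and the Gaussian variance estimate; the delicate part is the finite-dimensional step — organizing the Cramér--Wold reduction to a single integral against a fixed bounded weight $\psi$, checking that on the dense subspace of step functions the convergence $J^n(g)\to J(g)$ in law is a genuine consequence of the finite-dimensional convergence of $\zeta_n$ to the Brownian sheet, and making sure that all constants produced by Lemma \ref{lema de densitat} are uniform in $n$.
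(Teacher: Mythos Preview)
Your proof is correct and follows essentially the same route as the paper: tightness via Theorem \ref{tightness Bickel-Wichura} with the measures $\mu_n$ having density $|f_n|^{2q}$, and finite-dimensional convergence via Lemma \ref{lema de densitat} on $L^{2q}([0,T])$ with step functions as the dense class, reducing to the finite-dimensional convergence of $\zeta_n$. The only cosmetic differences are that the paper absorbs the bounded weight $\psi=\sum_j a_j\I_{[0,t^{(j)}]}$ into the argument of $J^n$ rather than into its definition, and proves $|f_n|^{2q}\to|f|^{2q}$ in $L^1$ via a convergence-in-probability plus uniform-integrability argument instead of your direct inequality-plus-H\"older estimate.
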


Before we start with the proof, we shall make some remarks.

The introduction of the parameter $q$ is due to the fact that, in the case of the Kac-Stroock kernels, when $d \geq 2$, we have only been able to check condition \eqref{increment moments} for sequences in $L^{2q}([0,T])$ with $q > 1$.

As the proof shows, it turns out that, as long as condition $\eqref{increment moments}$ holds, it is only required that the finite dimensional distributions of the processes $\zeta_n$ converge to those of the Brownian sheet. There is no need for the convergence in law of the processes $\zeta_n$ to the Brownian sheet in the space $\mathcal{C}([0,T])$.

One might note as well that, by taking $f_n = f \in L^{2q}([0,T])$ for all $n \in \mathbb{N}$, the integrals $\int f(u) \theta_n(u)du$ converge to the integral $\int f(u) W(du)$.

Given that $q \geq 1$ and $m > 2q$, condition \eqref{increment moments} implies that, for each $0 \leq s \leq t \leq T$, the moments $\E\left[ |\Delta_s X_n(t)|^2 \right]$ are uniformly bounded in $n \in \mathbb{N}$ (because $f_n$ converges in $L^{2q}([0,T])$, implying that the sequence of $L^{2q}([0,T])$-norms is bounded). Thus, for each $0 \leq s \leq t \leq T$, the sequence $\{ |\Delta_s X_n(t)|^2 \}_{n\in \mathbb{N}}$ is uniformly integrable, meaning that we have, as well, convergence of the first and second moments. Moreover, if condition \eqref{increment moments} holds for any even integer $m > 2q$, then this will imply that
\begin{equation*}
    \lim_{n \to \infty}\E\left[ |\Delta_s X_n(t)|^m \right] = \frac{m!}{2^{\frac{m}{2}} \left( \frac{m}{2}\right)!}\left(\int_{[s,t]} f^2(u)du \right)^{\frac{m}{2}}
\end{equation*}
for any even integer $m \geq 2$.

\begin{proof}[Proof of Theorem \ref{main theorem}]
    As mentioned in the previous section, it suffices to show that the sequence is tight and that the finite dimensional distributions of $X_n$ converge towards the ones of $X$.

    \textbf{Tightness}

    This is an immediate consequence of Theorem \ref{tightness Bickel-Wichura} by taking, up to some positive factor,
    \begin{equation*}
        \mu_n(A) \coloneqq \int_A |f_n(u)|^{2q} du, \quad \mu(A) \coloneqq \int_A |f(u)|^{2q} du,
    \end{equation*}
    for all measurable sets $A \subset [0,T]$ and noticing that, for any $0 \leq s \leq t \leq T$, 
    \begin{equation*}
       \Delta_s X_n(t) = \int_{(s,t]} f_n(u)  \theta_n(u)du = \int_{[0,T]} f_n(u) \I_{(s,t]}(u) \theta_n(u)du.
    \end{equation*}
    Indeed, $\mu_n$, $n \in \mathbb{N}$, and $\mu$ defined as above are nonnegative finite measures with continuous marginals and, since $f_n \to f$ in $L^{2q}([0,T])$, we have that $\mu_n$ converges weakly towards $\mu$. To see this, take any bounded continuous function $g \colon [0,T] \to \R$,
    \begin{equation*}
        \left| \int_{[0,T]} g(u) \mu_n(du) - \int_{[0,T]} g(u) \mu(du)  \right| \leq \sup_{u \in [0,T]}|g(u)| \int_{[0,T]} \left||f_n(u)|^{2q} - |f(u)|^{2q} \right|du. 
    \end{equation*}
    If we show that this last integral converges to $0$ as $n$ approaches infinity then we are done. Equivalently, we need to show that
    \begin{equation*}
        \int_{[0,T]} \left||f_n(u)|^{2q} - |f(u)|^{2q} \right|d \lambda \xrightarrow{n \to \infty}0,
    \end{equation*}
    where $\lambda$ is a probability measure on $[0,T]$ defined by $\frac{du}{d\lambda} = \prod_{i=1}^d T_i$. To show this, we will show that the functions $g_n(u) = \left||f_n(u)|^{2q} - |f(u)|^{2q} \right|$ converge to $0$ in probability and that they are uniformly integrable (all with respect to the probability measure $\lambda$). \\
    To see the convergence in probability, we first note that, since $f_n \to f$ in $L^{2q}([0,T])$, we have $f_n \to f$ in $L^{2q}([0,T], \lambda)$. In particular, we will have convergence in probability with respect to $\lambda$. Since convergence in probability is preserved by continuous transformations, we will have that $g_n$ converges to $0$ in probability. To see that the $g_n$ are uniformly integrable, we first note that
    \begin{equation*}
        |g_n(u)| \leq |f_n(u)|^{2q} + |f(u)|^{2q},
    \end{equation*}
    so it suffices to show that the $|f_n(u)|^{2q}$ are uniformly integrable, but this follows from the fact that the sequence converges in $L^{2q}([0,T],\lambda)$. 

    \textbf{Convergence of the finite dimensional distributions}

    To show that the finite dimensional distributions converge as desired, it suffices to show that, for any, $k \in \mathbb{N}$, $a_1,...,a_k \in \R$ and any $t_1, ... , t_k \in [0,T]$, the sums $\sum_{j=1}^k a_j X_n(t_j)$ converge in law towards $\sum_{j=1}^k a_j X(t_j)$. In order to prove this, we first note that,
    \begin{align*}
        \sum_{j=1}^k a_j X_n(t_j)  = J^n\left(f_n \sum_{j=1}^k a_j \I_{[0,t_j]} \right),\quad \sum_{j=1}^k a_j X(t_j) = J\left(f \sum_{j=1}^k a_j \I_{[0,t_j]} \right). 
    \end{align*}
    where $J^n, J$ are the linear maps defined by
    \begin{align*}
        J^n(g) \coloneqq \int_{[0,T]} g(u)  \theta_n(u)du, \quad
        J(g) \coloneqq \int_{[0,T]} g(u) W(du).
    \end{align*}
    Since $f_n, f \in L^{2q}([0,T])$, the elements $f_n \sum_{j=1}^k a_j \I_{[0,t_j]}$ and $f \sum_{j=1}^k a_j \I_{[0,t_j]}$ are in the same space as well, so the maps can be defined in $(L^{2q}([0,T]), ||\cdot ||_{2q})$, where $||\cdot ||_{2q}$ is the standard norm in this space. By hypothesis and Hölder's inequality, we have
    \begin{equation*}
        \sup_{n \geq 1}\E\left[\left| J^n(g) \right| \right] \leq C ||g||_{2q}.
    \end{equation*}
    Similarly, by the isometry property of the Wiener integral and Hölder's inequality again,
    \begin{equation*}
        \E\left[\left| J(g) \right| \right] \leq C ||g||_{2q},
    \end{equation*}
    for some positive constant $C$ which might be different from the one seen in the hypotheses. This in particular means that the maps $J^n$ and $J$ take values in $L^1(\Omega)$. Thus, it only remains to show that $J^n(g)$ converges in law towards $J(g)$ for simple functions $g$ of the form
    \begin{equation*}
        g(u) = \sum_{j=1}^l g_j \I_{(s_{j-1}, s_j]} (u),
    \end{equation*}
    with $l \geq 1$, $g_j \in \R$ and $0 = s_0 < s_1 < ... < s_l =T $, which are dense in $L^{2q}([0,T])$. Indeed, if we manage to show this, then, by the first part of Lemma \ref{lema de densitat}, we will have that $J^n\left(f \sum_{j=1}^k a_j \I_{[0,t_j]} \right)$ converges in law towards $J\left(f \sum_{j=1}^k a_j \I_{[0,t_j]} \right)$ and, by the second part and the fact that $f_n \sum_{j=1}^k a_j \I_{[0,t_j]} \to f \sum_{j=1}^k a_j \I_{[0,t_j]}$ in $L^{2q}([0,T])$, we will have that $J^n\left(f_n \sum_{j=1}^k a_j \I_{[0,t_j]} \right)$ will converge in law towards $J\left(f \sum_{j=1}^k a_j \I_{[0,t_j]} \right)$ as well. \\
    To show the convergence in law for simple functions, we only need to notice that
    \begin{align*}
        J^n(g) = \int_{[0,T]} \left( \sum_{j=1}^l g_j \I_{(s_{j-1}, s_j]} (u) \right) \theta_n (u)du 
        = \sum_{j=1}^l g_j \int_{(s_{j-1}, s_j]} \theta_n(u)du
    \end{align*}
    and that this last sum converges in law towards
    \begin{equation*}
        \sum_{j=1}^l g_j \int_{(s_{j-1}, s_j]} W(du) = \int_{[0,T]} \left( \sum_{j=1}^l g_j \I_{(s_{j-1}, s_j]} (u) \right) W(du) = J(g)
    \end{equation*}
    because the finite dimensional distributions of $\zeta_n$ converge to those of the Brownian sheet.
\end{proof}

\subsection{Convergence for the Donsker kernels}\label{sec:donsker}

This section, and the following one, is devoted to prove that condition \eqref{increment moments} is satisfied for the Donsker kernels. This is the content of Proposition \ref{increment moments Donsker}

\begin{proposition}\label{increment moments Donsker}
    Let $\{\theta_n\}_{n \in \mathbb{N}}$ be the Donsker kernels defined in \eqref{donsker kernels}. Then, for any even integer $m \in \mathbb{N}$ and for any sequence of functions $\{g_n\}_{n \in \mathbb{N}} \subset L^2([0,T])$, if the random variables $\{Z_k\}_{k \in \mathbb{N}^d}$ have finite moments of order $m$, we have that
    \begin{equation*}
        \E\left[\left( \int_{[0,T]} g_n(u) \theta_n(u)du \right)^m \right] \leq C \left( \int_{[0,T]} g_n^2(u) du  \right)^{\frac{m}{2}},
    \end{equation*}
    for some positive constant $C$ independent of $n$ and $\{g_n\}_{n\in \mathbb{N}}$.
\end{proposition}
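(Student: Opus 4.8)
The plan is to discretize the integral so that it becomes a \emph{finite} linear combination $\sum_k a_k^n Z_k$ of the i.i.d.\ variables, to prove a Marcinkiewicz--Zygmund/Rosenthal-type moment bound for such a combination, and then to convert the resulting discrete $\ell^2$ norm of the coefficients back into $\|g_n\|_{L^2([0,T])}$ by Cauchy--Schwarz.

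First I would write the integral out explicitly. Since $\I_{[k-1,k)}(nu)=1$ precisely when $u$ lies in the cube $R_k^n \coloneqq \big(\prod_{i=1}^d [(k_i-1)/n,\,k_i/n)\big)\cap[0,T]$, and since the family $\{R_k^n\}_{k\in\mathbb{N}^d}$ is pairwise disjoint, only finitely many of its members meet $[0,T]$, it tiles $[0,T]$ up to a Lebesgue-null set, and $|R_k^n|\le n^{-d}$, we obtain
\begin{equation*}
    \int_{[0,T]} g_n(u)\theta_n(u)\,du \;=\; \sum_{k\in\mathbb{N}^d} a_k^n Z_k, \qquad a_k^n \coloneqq n^{\frac d2}\int_{R_k^n} g_n(u)\,du,
\end{equation*}
a finite sum. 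Cauchy--Schwarz gives $(a_k^n)^2 \le n^d\,|R_k^n|\int_{R_k^n} g_n^2 \le \int_{R_k^n} g_n^2$, and summing over the disjoint cubes yields $\sum_k (a_k^n)^2 \le \int_{[0,T]} g_n^2(u)\,du$. Hence it suffices to prove that $\E\big[(\sum_k a_k^n Z_k)^m\big] \le C_m\big(\sum_k (a_k^n)^2\big)^{m/2}$ with $C_m$ depending only on $m$ and $\E[|Z|^m]$.

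For this bound I would expand the $m$-th power:
\begin{equation*}
    \E\!\left[\Big(\sum_k a_k^n Z_k\Big)^m\right] \;=\; \sum_{(k_1,\dots,k_m)} a_{k_1}^n\cdots a_{k_m}^n\,\E\big[Z_{k_1}\cdots Z_{k_m}\big].
\end{equation*}
By independence and $\E[Z_k]=0$, a tuple contributes only when every index value it uses occurs at least twice, i.e.\ the partition of $\{1,\dots,m\}$ obtained by grouping equal indices has all blocks of size $\ge 2$; for such a tuple $|\E[Z_{k_1}\cdots Z_{k_m}]| \le K^{m/2}$, where $K \coloneqq \max_{2\le r\le m}\E[|Z|^r]<\infty$ (there are at most $m/2$ distinct indices, $\E[Z^2]=1$, and $r\mapsto\E[|Z|^r]$ is log-convex). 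Fixing such a partition with blocks $B_1,\dots,B_p$ ($p\le m/2$, each $|B_l|\ge 2$), summing over the index values assigned to the blocks and dropping the distinctness constraint gives at most
\begin{equation*}
    \prod_{l=1}^p \sum_{j} |a_j^n|^{|B_l|} \;=\; \prod_{l=1}^p \|a^n\|_{\ell^{|B_l|}}^{|B_l|} \;\le\; \prod_{l=1}^p \|a^n\|_{\ell^2}^{|B_l|} \;=\; \Big(\sum_j (a_j^n)^2\Big)^{m/2},
\end{equation*}
using the embedding $\ell^r \hookrightarrow \ell^2$ for $r\ge 2$. Since the number of partitions of $\{1,\dots,m\}$ into blocks of size $\ge 2$ is a constant $p_m$ depending only on $m$, this gives the desired inequality with $C_m = p_m K^{m/2}$; combining it with $\sum_k (a_k^n)^2 \le \int_{[0,T]} g_n^2$ completes the proof.

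The argument is essentially routine; the points needing a little care are checking that the cubes $R_k^n$ genuinely tile $[0,T]$ up to a null set, so the $\ell^2$--$L^2$ comparison loses nothing, and organizing the combinatorial expansion so that the constant is visibly independent of $n$ and of $\{g_n\}$ (it is, since it involves only $m$ and the moments of $Z$). Alternatively, once the integral has been rewritten as $\sum_k a_k^n Z_k$, one can invoke Rosenthal's inequality for sums of independent centered random variables and skip the explicit expansion altogether.
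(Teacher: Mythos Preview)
Your proof is correct and is a genuinely different (and shorter) route than the paper's. The paper stays in the continuum: it expands $\E[(\int g_n\theta_n)^m]$ as an integral over $[0,T]^m$, expands $\E[\prod_i\theta_n(u^i)]$ as a sum over $(k^1,\dots,k^m)$, keeps only the tuples in which every index is repeated, and then bounds the surviving terms by indicator functions of the event that the $u^i$ fall into common $n^{-1}$-cubes. This forces a case analysis into ``pair'' contributions $\I_{\{|u^j-u^l|<1/n\}}$ and ``triple'' contributions $\I_{\{|u^j-u^l|<1/n\}}\I_{\{|u^l-u^r|<1/n\}}\I_{\{|u^j-u^r|<1/n\}}$, each of which is reduced to $\int g_n^2$ by $2ab\le a^2+b^2$, $2abc\le ab^2+ac^2$, and repeated integration. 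You instead discretize first, recognizing that $\int g_n\theta_n$ is literally the finite sum $\sum_k a_k^n Z_k$; a Cauchy--Schwarz on each cube converts $\sum_k(a_k^n)^2$ to $\|g_n\|_{L^2}^2$, and the Rosenthal/Marcinkiewicz--Zygmund bound then follows from the $\ell^r\hookrightarrow\ell^2$ embedding with no case analysis. Your argument is cleaner and makes the $n$- and $g_n$-independence of the constant immediate; the paper's approach has the (mild) advantage of running in parallel with its later Kac--Stroock computation, where no such discrete representation is available.
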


\begin{proof}
    We start by observing that
    \begin{equation*}
        \E\left[ \left( \int_{[0,T]} g_n(u) \theta_n(u)du \right)^m  \right] = \int_{[0,T]^m} \left( \prod_{i=1}^m g_n({u}^i) \right) \E\left[ \prod_{i=1}^m \theta_n({u}^i) \right] d{u}^1 ... d{u}^m,
    \end{equation*}
    where ${u}^i = (u_1^i,...,u_d^i) \in [0,T]$ and
    \begin{equation*}
        \E\left[ \prod_{i=1}^m \theta_n({u}^i) \right] = n^{m\frac{d}{2}} \sum_{k^1,...,k^m \in \mathbb{N}^d} \E\left[ Z_{k^1} \cdot ... \cdot Z_{k^m} \right] \prod_{i=1}^m \I_{[k^i-1, k^i)}(n{u}^i).
    \end{equation*}
    Since the random variables $\{Z_k\}_k$ are independent and identically distributed with zero means, we have that
    \begin{equation*}
        \E\left[ Z_{k^1} \cdot ... \cdot Z_{k^m} \right] = 0
    \end{equation*}
    whenever there is some $j \in \{1,...,m\}$ such that $k^j \neq k^i$ for all $i \in \{1,...,m\} \backslash \{j\}$. Thus,
    \begin{equation*}
        \E\left[ \prod_{i=1}^m \theta_n({u}^i) \right] = n^{m\frac{d}{2}} \sum_{(k^1,...,k^m) \in A^m}  \E\left[ Z_{k^1} \cdot ... \cdot Z_{k^m} \right] \prod_{i=1}^m \I_{[k^i-1, k^i)}(n{u}^i),
    \end{equation*}
    where $A_m \subset \left(\mathbb{N}^d \right)^m$ is the set of points $(k^1,...,k^d)$ such that for all $l \in \{1,...,m\}$ there is some $j \in \{1,...,m\}\backslash \{l\}$ such that $k^l = k^j$. Given that the random variables $Z_k$ have finite moments of order $m$, we have that 
    \begin{equation*}
        \left| \E\left[ \prod_{i=1}^m \theta_n({u}^i) \right] \right| \leq C n^{m \frac{d}{2}} \sum_{(k^1,...,k^m) \in A^m} \prod_{i=1}^m \I_{[k^i-1, k^i)}(n{u}^i)
    \end{equation*}
    for some positive constant $C$ independent of $n$. Now let us assume that in this last sum there is a non-zero summand. That is, there is some $(k^1,...,k^m) \in A^m$ such that
    \begin{equation*}
        \prod_{i=1}^m \I_{[k^i-1, k^i)}(n{u}^i) \neq 0,
    \end{equation*}
    which, in particular, implies that each factor in this product is non-zero and hence,
    \begin{equation*}
        \I_{[k_j^i - 1, k_j^i)}(n u_j^i) \neq 0
    \end{equation*}
    for all $i \in \{1,...,m\}$ and $j \in \{1,...,d\}$ or, equivalently, $n u_j^i \in [ {k_j^i - 1}, {k_j^i} )$ for all $i \in \{1,...,m\}$ and $j \in \{1,...,d\}$. Given that we are in $A_m$, for each $i \in \{1,...,m\}$ there will be some $l \in \{1,...,m\}\backslash \{i\}$ such that $k^i = k^l = k = (k_1,...,k_d)$. For this pair of indices, we will have $nu_j^i, nu_j^l \in [{k_j - 1}, {k_j} )$ for all $j \in \{1,...,d\}$ and therefore, $|u_j^l - u_j^i| < \frac{1}{n}$, for all $j \in \{1,...,d\}$. It can happen as well that there is some $r \in \{1,...,m\} \backslash \{i,l\}$ for which $k^r = K^l = k^i$ and thus, verifying that $|u_j^l - u_j^r| < \frac{1}{n}$ and $|u_j^r - u_j^i| < \frac{1}{n}$ for all $j \in \{1,...,d\}$ as well. Of course there might be cases where there are four or more variables $nu^i$ whose components are not apart more than $\frac{1}{n}$, but this kind of situations can be reduced to the two previous ones.  Bearing this in mind, we have
    \begin{equation*}
        \sum_{(k^1,...,k^m) \in A^m} \prod_{i=1}^m \I_{[k^i-1, k^i)}(n{u}^i) \leq \I_{D^m} (u_1^1,u_1^2,...,u_1^m; ...; u_d^1, ..., u_d^m)
    \end{equation*}
    where $D_m$ is the set of points
    \begin{equation*}
        (u_1^1,u_1^2,...,u_1^m; ...; u_d^1, ..., u_d^m) \in \prod_{j=1}^d (s_j,t_j]^m
    \end{equation*}
    such that for each $l \in \{1,...,m\}$ there is some $j \in \{1,...,m\}\backslash \{l\}$ verifying $|u_i^l - u_i^j| < \frac{1}{n}$ for every $i \in \{1,...,d\}$ and that, if additionally, there is some $r \in \{1,...,m\}\backslash \{j,l\}$ with $|u_i^r - u_i^l | < \frac{1}{n}$ for all $i \in \{1,...,d\}$, then $|u_i^j - u_i^r| < \frac{1}{n}$ for all $i \in \{1,...,d\}$.\\
    However, the indicator $\I_{D^m}$ can be bounded by a finite sum (whose number of summands depends only on $m$) of products of indicators of the form
    \begin{equation*}
        \I_{[0,n^{-1})^d} (|u_1^j - u_1^l|,..., |u_d^j - u_d^l|)
    \end{equation*}
    and
    \begin{equation*}
         \I_{[0,n^{-1})^d} (|u_1^j - u_1^l|,..., |u_d^j - u_d^l|) \cdot  \I_{[0,n^{-1})^d} (|u_1^l - u_1^r|,..., |u_d^l - u_d^r|) \cdot 
         \I_{[0,n^{-1})^d} (|u_1^j - u_1^r|,..., |u_d^j - u_d^r|).
    \end{equation*}
    Moreover, in each of the products of indicators conforming each summand, all the variables $u_1^1,u_1^2,...,u_1^m; ...; u_d^1, ..., u_d^m$ appear in only one of the two types of indicators specified above. All in all, we have that $\left|  \E\left[ \left( \Delta_s X_n(t) \right)^m  \right] \right|$ can be bounded (modulo some positive constant independent of $n$) by a finite sum of finite products of factors of the form
    \begin{equation*}
        n^d \int_{\prod_{i=1}^d [0,T_i]^2} |g_n({u}^j)| |g_n({u}^l)|  \I_{[0,n^{-1})^d} (|u_1^j - u_1^l|,..., |u_d^j - u_d^l|)du_1^j du_1^l ... du_d^j du_d^l,
    \end{equation*}
    and
    \begin{align*}
        &n^{3\frac{d}{2}} \int_{\prod_{i=1}^d [0,T_i]^3} |g_n({u}^j)| |g_n({u}^l)| |g_n({u}^r)| \I_{[0,n^{-1})^d} (|u_1^j - u_1^l|,..., |u_d^j - u_d^l|) \times \\
        &\times \I_{[0,n^{-1})^d} (|u_1^l - u_1^r|,..., |u_d^l - u_d^r|) \cdot 
         \I_{[0,n^{-1})^d} (|u_1^j - u_1^r|,..., |u_d^j - u_d^r|)   du_1^j du_1^l du_1^r ... du_d^j du_d^l du_d^r.
    \end{align*}
    Where the number of factors of each summand, say $N$, is such that
    \begin{equation*}
        n^{\alpha_1 \frac{d}{2}} \cdot ... \cdot n^{\alpha_N \frac{d}{2}} = n^{m\frac{d}{2}}, \quad \alpha_i \in \{2,3\}.
    \end{equation*}
    So it only remains to show that the first kind of factors can be bounded, modulo some positive constant, by $\int_{[0,T]} g_n^2({u})d{u}$ and that the last type of factors can be bounded by $\left( \int_{[0,T]} g_n^2({u})d{u}  \right)^{\frac{3}{2}}$.

    Using that $2ab \leq a^2 + b^2$ for any real numbers $a,b \in \R$, we have that 
    \begin{align*}
        &\int_{\prod_{i=1}^d [0,T_i]^2} |g_n({u}^j)| |g_n({u}^l)|  \I_{[0,n^{-1})^d} (|u_1^j - u_1^l|,..., |u_d^j - u_d^l|)du_1^j du_1^l ... du_d^j du_d^l \\
        &\leq \int_{\prod_{i=1}^d [0,T_i]^2}  g_n^2({u}^l)\I_{[0,n^{-1})^d} (|u_1^j - u_1^l|,..., |u_d^j - u_d^l|)du_1^j du_1^l ... du_d^j du_d^l \\
        &= \int_{[0,T]} g_n^2({u}^l) \left( \int_{[0,T]}\I_{[0,n^{-1})^d} (|u_1^j - u_1^l|,..., |u_d^j - u_d^l|) d{u}^j \right)d{u^l} \\
        &\leq n^{-d} \int_{[0,T]} g_n^2 ({u})d {u}.
    \end{align*}
    As for the last type of integrals, we will use the estimate $2abc \leq ab^2 + ac^2$, $a,b,c \geq 0$, which leads to
    \begin{align*}
        &\int_{\prod_{i=1}^d [0,T_i]^3} |g_n({u}^j)| |g_n({u}^l)| |g_n({u}^r)| \I_{[0,n^{-1})^d} (|u_1^j - u_1^l|,..., |u_d^j - u_d^l|) \times \\
        &\times \I_{[0,n^{-1})^d} (|u_1^l - u_1^r|,..., |u_d^l - u_d^r|) \cdot 
         \I_{[0,n^{-1})^d} (|u_1^j - u_1^r|,..., |u_d^j - u_d^r|)   du_1^j du_1^l du_1^r ... du_d^j du_d^l du_d^r \\
         &\leq  \int_{\prod_{i=1}^d [0,T_i]^3} g_n^2({u}^j) |g_n({u}^l)| \times \\ 
         &\times \left( \prod_{i=1}^d \I_{[0,n^{-1})} (|u_i^j - u_i^l|)\I_{[0,n^{-1}) }(|u_i^l - u_i^r|) \I_{[0,n^{-1}) }(|u_i^j - u_i^r|) \right)    du_1^j du_1^l du_1^r ... du_d^j du_d^l du_d^r \\
         &\leq  \int_{\prod_{i=1}^d [0,T_i]^3} g_n^2({u}^j) |g_n({u}^l)| \left( \prod_{i=1}^d \I_{[0,n^{-1}) }(|u_i^j - u_i^l|)\I_{[0,n^{-1}) }(|u_i^l - u_i^r|)  \right)    du_1^j du_1^l du_1^r ... du_d^j du_d^l du_d^r \\
         &= \int_{\prod_{i=1}^d [0,T_i]^2} g_n^2({u}^j) |g_n({u}^l)| \left( \prod_{i=1}^d \I_{[0,n^{-1})} (|u_i^j - u_i^l|)\right) \times \\
         &\times \left( \int_{[0,T]} \I_{[0,n^{-1})^d}(|u_1^l - u_1^r|,...,|u_d^l - u_d^r|) d{u}^r\right) du_1^j du_1^l ... du_d^j du_d^l \\
         &\leq n^{-d} \int_{[0,T]}|g_n({u}^l) | \left( \int_{[0,T]} g_n^2({u}^j) \I_{[0,n^{-1})^d}(|u_1^j - u_1^l|,...,|u_d^j - u_d^l|)  d {u}^j\right) d {u}^l \\
         &\leq n^{-d} \left( \int_{[0,T]} g_n^2({u}^l) d{u}^l \right)^{\frac{1}{2}} (\Theta(s,t))^\frac{1}{2},
    \end{align*}
    where in the last step we have used Cauchy-Schwarz's inequality and where
    \begin{align*}
        \Theta &= \int_{[0,T]} \left( \int_{[0,T]} g_n^2(u^j) \I_{[0,n^{-1})^d}(|u_1^j - u_1^l|,...,|u_d^j - u_d^l|) d{u}^j \right)^2d{u}^l \\
        &= \int_{\prod_{i=1}^d [0,T_i]^3} g_n^2({u}^j)g_n^2({u}^p)\left( \prod_{i=1}^d \I_{[0,n^{-1})}(|u_i^j - u_i^l|) \I_{[0,n^{-1})}(|u_i^p - u_i^l|) \right)du_1^j du_1^p du_1^l ... du_d^j du_d^p du_d^l \\
        &\leq \int_{\prod_{i=1}^d [0,T_i]^3} g_n^2({u}^j)g_n^2({u}^p)\left( \prod_{i=1}^d \I_{[0,n^{-1})}(|u_i^j - u_i^l|)  \right)du_1^j du_1^p du_1^l ... du_d^j du_d^p du_d^l \\
        &= \int_{\prod_{i=1}^d [0,T_i]^2} g_n^2({u}^j)g_n^2({u}^p)\left( \prod_{i=1}^d \int_{0}^{T_i}\I_{[0,n^{-1})}(|u_i^j - u_i^l|) du_i^l \right)du_1^j du_1^p ... du_d^j du_d^p \\
        &\leq n^{-d} \int_{\prod_{i=1}^d [0,T_i]^2} g_n^2({u}^j)g_n^2({u}^p)du_1^j du_1^p ... du_d^j du_d^p \\
        &= n^{-d} \left( \int_{[0,T]}g_n^2({u}) d{u} \right)^2.
    \end{align*}
    Finishing the proof.   
\end{proof}

Hence, in particular, condition $\eqref{increment moments}$ is satisfied with $q=1$ and $m = 4$ (by requiring that the random variables $Z_k$ have finite moments of order $m$).

\subsection{Convergence for the Kac-Stroock kernels}\label{sec:kac-stroock}

The last section of this paper is devoted to verify that condition \eqref{increment moments} also holds for the Kac-Stroock kernels. As mentioned in Section \ref{sec:main result}, we have only been able to verify it for sequences in $L^{2q}([0,T])$ with $q > 1$ when $d \geq 2$. Moreover, and as we will see, the proof for these kernels is a bit more involved in the sense that we were not able to follow a direct approach as in the Donsker case. More precisely, we will check condition \eqref{increment moments} for simple functions supported on rectangles $[s,t] \subset [0,T]$ such that $0 < s < t < 2s$ and then, by using Lemma \ref{lema s<t<2s a tot 0<s<t} (which we state below without proof since it is a generalization of Lemma 3.2 in \cite{fractionalbrowniansheet} and a density argument, we will obtain the result for general sequences. 

\begin{lemma}\label{lema s<t<2s a tot 0<s<t}
    Let $Z = \{Z(t) \colon t \in [0,T]\}$ be a continuous process. Assume that for a fixed even $m \in \mathbb{N}$ and some $\delta_1,..., \delta_d \in (0,1)$ there exists $C > 0$ such that
    \begin{equation}\label{increment moments s<t<2s}
        \E\left[ \left( \Delta_s Z(t)  \right)^m  \right] \leq C \prod_{i=1}^d (t_i - s_i)^{m \delta_i} 
    \end{equation}
    for any $0 < s < t < 2s$. Then there exists a constant $\Tilde{C} > 0$ that only depends on $m$, $\delta_1,...,\delta_d$ such that $Z$ enjoys \eqref{increment moments s<t<2s} for any $0 \leq s \leq t \leq T$, with $\Tilde{C}\cdot C$ instead of $C$.
\end{lemma}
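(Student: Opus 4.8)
The plan is to combine a geometric chaining argument across scales with the additivity of the rectangular increment and Minkowski's inequality in $L^m(\Omega)$. First I would dispose of the degenerate and boundary cases. If $s_i = t_i$ for some $i$ then $\Delta_s Z(t) = 0$ and the bound is trivial, so we may assume $s < t$ componentwise. If some coordinates of $s$ vanish, I would replace those zero coordinates by a small $\varepsilon > 0$, prove the estimate for the resulting point $s^{(\varepsilon)}$ with $0 < s^{(\varepsilon)} < t$, and then let $\varepsilon \to 0^+$: since the rectangular increment is a finite linear combination of values of $Z$ at the corners of the rectangle, continuity of $Z$ gives $\Delta_{s^{(\varepsilon)}}Z(t) \to \Delta_s Z(t)$ almost surely, and Fatou's lemma (legitimate because $m$ is even, so $(\Delta Z)^m \ge 0$) together with the continuity of $s \mapsto \prod_i(t_i - s_i)^{m\delta_i}$ transfers the bound. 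Hence it suffices to establish the estimate for $0 < s < t$, with a constant depending only on $m,\delta_1,\dots,\delta_d$.

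The one-dimensional building block is the following. Fix a coordinate $i$ with $0 < s_i < t_i$ and partition $(s_i,t_i]$ by a finite geometric sequence $s_i = \sigma_i^{0} < \sigma_i^{1} < \dots < \sigma_i^{k_i+1} = t_i$ with all ratios strictly between $1$ and $2$: concretely, take $\sigma_i^{j} = (3/2)^{j}s_i$ as long as this is $< t_i$ and set $\sigma_i^{k_i+1} = t_i$, so the final ratio $t_i/\sigma_i^{k_i}$ lies in $(1,3/2]$ as well (if $t_i \le (3/2)s_i$ this is the trivial partition, $k_i = 0$). The key estimate is
\begin{equation*}
    \sum_{j=0}^{k_i}\bigl(\sigma_i^{j+1}-\sigma_i^{j}\bigr)^{\delta_i} \le K_{\delta_i}\,(t_i-s_i)^{\delta_i},
\end{equation*}
with $K_{\delta_i}$ depending only on $\delta_i$. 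To see it, note each length is at most $(3/2)^{j}s_i$, so the sum is bounded by a geometric series controlled by its largest term, which is $\asymp t_i^{\delta_i}$; and once $t_i/s_i \ge 3/2$ one has $t_i - s_i \ge t_i/3$, so $t_i^{\delta_i} \le 3^{\delta_i}(t_i-s_i)^{\delta_i}$, while for $t_i/s_i < 3/2$ the sum is simply $(t_i-s_i)^{\delta_i}$. The crucial feature is that although $k_i \sim \log(t_i/s_i)$ is unbounded as $s_i \to 0$, the geometric spacing keeps $K_{\delta_i}$ independent of $s_i$ and $t_i$.

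Now I would take the product grid of the $d$ one-dimensional partitions, producing sub-boxes $(a,b]$ with $a = (\sigma_1^{j_1},\dots,\sigma_d^{j_d})$ and $b = (\sigma_1^{j_1+1},\dots,\sigma_d^{j_d+1})$. Each of these satisfies $0 < a < b < 2a$, so hypothesis \eqref{increment moments s<t<2s} applies and gives $\E[(\Delta_a Z(b))^m] \le C\prod_i(b_i-a_i)^{m\delta_i}$. By additivity of the rectangular increment, $\Delta_s Z(t) = \sum_{(j_1,\dots,j_d)}\Delta_a Z(b)$, so Minkowski's inequality in $L^m(\Omega)$ yields
\begin{equation*}
    \bigl(\E[(\Delta_s Z(t))^m]\bigr)^{1/m} \le \sum_{(j_1,\dots,j_d)}\bigl(\E[(\Delta_a Z(b))^m]\bigr)^{1/m} \le C^{1/m}\prod_{i=1}^{d}\Bigl(\sum_{j_i=0}^{k_i}\bigl(\sigma_i^{j_i+1}-\sigma_i^{j_i}\bigr)^{\delta_i}\Bigr).
\end{equation*}
Applying the one-dimensional estimate to each factor and raising to the power $m$ gives the claim with $\tilde{C} = \prod_{i=1}^d K_{\delta_i}^{m}$, which depends only on $m$ and $\delta_1,\dots,\delta_d$.

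I expect the main obstacle to be exactly the uniformity of the one-dimensional estimate: an arbitrary fine partition of $(s_i,t_i]$ would make $\sum_j(\cdot)^{\delta_i}$ blow up (as $\delta_i < 1$), so one must use the geometric subdivision and argue that the sum is dominated by its top term $\asymp t_i^{\delta_i}$, which in turn is comparable to $(t_i-s_i)^{\delta_i}$ once the ratio is bounded away from $1$. A minor point requiring care is that the hypothesis' inequalities $s<t<2s$ are strict, which forces the common ratio to lie strictly between $1$ and $2$ and obliges a short verification that the last subinterval of each partition also satisfies $b_i < 2a_i$.
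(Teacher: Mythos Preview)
Your argument is correct. The paper does not supply its own proof of this lemma; it is stated without proof as a multiparameter generalisation of Lemma~3.2 in \cite{fractionalbrowniansheet}, and your geometric-subdivision approach (partition each coordinate interval with ratio strictly between $1$ and $2$, use additivity of the rectangular increment and Minkowski in $L^m$, then sum the resulting geometric series) is exactly the standard way that result is established. All the delicate points---the handling of the last subinterval so that $b_i < 2a_i$ remains strict, the comparison $t_i^{\delta_i}\lesssim (t_i-s_i)^{\delta_i}$ once $t_i/s_i\ge 3/2$, and the passage to $s_i=0$ via Fatou---are dealt with correctly.
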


\begin{lemma}\label{lema increment moments simples}
    If inequality 
    \begin{equation}\label{increment moments simple}
        \E\left[\left( \int_{[0,T]} g(u) \theta_n(u)du \right)^m \right] \leq C \left( \int_{[0,T]} |g(u)|^{2q} du  \right)^{\frac{m}{2q}},
    \end{equation}
    holds for any simple function $g$, some positive constant $C$ independent of $n$ and $g$ and some even integer $m \in \mathbb{N}$, then it also holds for any sequence of functions $\{g_n\}_{n \in \mathbb{N}} \subset L^{2q}([0,T])$ with the same values of $m$ and $C$ (that is, by replacing $g$ by $g_n$).
\end{lemma}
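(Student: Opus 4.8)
The plan is a routine density argument. Since the bound \eqref{increment moments simple} is already known for all simple functions and the map $g \mapsto \int_{[0,T]} g(u)\theta_n(u)\,du$ is linear, I would approximate each $g_n$ by simple functions and pass to the limit using Fatou's lemma, exploiting the fact that $m$ is even.

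First I would fix $n \in \mathbb{N}$ and record that the sample paths of the Kac-Stroock kernel $\theta_n$ are bounded on $[0,T]$ by a deterministic constant, namely $|\theta_n(u)| \le M_n \coloneqq n^{d/2}\left(\prod_{i=1}^d T_i\right)^{(d-1)/2}$ for all $u \in [0,T]$. Combined with the inclusion $L^{2q}([0,T]) \hookrightarrow L^1([0,T])$ coming from Hölder's inequality on the finite-measure space $[0,T]$, this shows that $\int_{[0,T]} h(u)\theta_n(u)\,du$ is well defined for every $h \in L^{2q}([0,T])$ and that
\[
    \left|\int_{[0,T]} h(u)\theta_n(u)\,du\right| \le M_n\left(\prod_{i=1}^d T_i\right)^{1-\frac{1}{2q}}\|h\|_{2q}
\]
pointwise in $\omega$.

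Then, for the fixed $g_n$, I would take a sequence $\{g^{(j)}\}_{j \in \mathbb{N}}$ of simple functions (finite linear combinations of indicators of rectangles, which are dense in $L^{2q}([0,T])$) with $g^{(j)}\to g_n$ in $L^{2q}([0,T])$. Applying the displayed estimate to $h = g^{(j)} - g_n$ gives $\int_{[0,T]} g^{(j)}\theta_n\,du \to \int_{[0,T]} g_n \theta_n\,du$ pointwise in $\omega$. Since $m$ is even, $\bigl(\int_{[0,T]} g^{(j)}\theta_n\,du\bigr)^m \ge 0$, so Fatou's lemma together with \eqref{increment moments simple} applied to each $g^{(j)}$ yields
\begin{align*}
    \E\left[\left(\int_{[0,T]} g_n\theta_n\,du\right)^m\right] &\le \liminf_{j\to\infty}\E\left[\left(\int_{[0,T]} g^{(j)}\theta_n\,du\right)^m\right] \\
    &\le \liminf_{j\to\infty} C\left(\int_{[0,T]} |g^{(j)}|^{2q}\,du\right)^{\frac{m}{2q}}.
\end{align*}
Finally, $g^{(j)}\to g_n$ in $L^{2q}([0,T])$ forces $\int_{[0,T]}|g^{(j)}|^{2q}\,du \to \int_{[0,T]}|g_n|^{2q}\,du$, so the right-hand side equals $C\left(\int_{[0,T]} |g_n|^{2q}\,du\right)^{m/2q}$; since $n$ was arbitrary and $C$ is the constant from the hypothesis, this is exactly the assertion of the lemma (with the replacement $g \mapsto g_n$).

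There is no genuine obstacle here, only two points needing care: the deterministic boundedness of $\theta_n$ on $[0,T]$, which is what makes the first estimate hold pathwise and hence gives pointwise (in $\omega$) convergence of the integrals along the approximating sequence, and the parity of $m$, which is precisely what allows Fatou's lemma to be invoked with no a priori integrability input on the limiting random variable.
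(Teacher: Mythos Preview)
Your argument is correct. It differs from the paper's own proof in a noteworthy way: the paper expands $\left(\int g\,\theta_n\right)^m$ as an $m$-fold integral, bounds $\bigl|\E\bigl[\prod_{j=1}^m \theta_n(u_j)\bigr]\bigr|$ by a constant depending on $n$, and then uses Cauchy--Schwarz in $L^2([0,T]^m)$ to show that $\E\bigl[\bigl(\int g_{n,p}\theta_n\bigr)^m\bigr]$ actually \emph{converges} to $\E\bigl[\bigl(\int g_n\theta_n\bigr)^m\bigr]$ as $p\to\infty$. You instead exploit the deterministic pathwise bound $|\theta_n(u)|\le M_n$ to get pointwise (in $\omega$) convergence of the integrals, and then Fatou's lemma together with the parity of $m$ gives you the one-sided inequality---which is all the lemma asks for. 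Your route is shorter and avoids the $m$-fold expansion entirely. The paper's route, on the other hand, proves a slightly stronger statement (convergence of the moments, not just an upper bound) and, as the authors remark after their proof, only needs $\bigl|\E\bigl[\prod_j\theta_n(u_j)\bigr]\bigr|$ to be bounded rather than $\theta_n$ itself; in the Kac--Stroock setting both hypotheses are trivially satisfied, so neither generality is an issue here.
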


\begin{proof}
    For any $g_n \in L^{2q}([0,T])$, there is a sequence of simple functions $\{g_{n,p}\}_{p \in \mathbb{N}}$ converging to $g$ in $L^{2q}([0,T])$. For each of these functions, inequality \eqref{increment moments simple} holds, so we only need to show that
    \begin{equation*}
        \lim_{p \to \infty} \E\left[\left( \int_{[0,T]} g_{n,p}(u) \theta_n(u)du \right)^m \right] = \E\left[\left( \int_{[0,T]} g_n(u) \theta_n(u)du \right)^m \right]
    \end{equation*}
    and that
    \begin{equation*}
        \lim_{p \to \infty} \left( \int_{[0,T]} |g_{n,p}(u)|^{2q} du  \right)^{\frac{m}{2q}} =  \left( \int_{[0,T]} |g_n(u)|^{2q} du  \right)^{\frac{m}{2q}}.
    \end{equation*}
    The second limit is an immediate consequence of the fact that convergence in $ L^{2q}([0,T])$ implies convergence of the norms. 
    
    As for the first limit, we have that
    \begin{align}\label{volem veure que va a 0}
        \Bigg| \E\left[ \left(\int_{[0,T]}g_{n,p}(u)\theta_n (u) du\right)^m \right] &- \E\left[ \left(\int_{[0,T]}g_n(u)\theta_n (u) du\right)^m \right] \Bigg| \nonumber \\
        &= \left| \E\left[ \int_{[0,T]^m} \left[ \prod_{j=1}^m g_{n,p}(u_j) - \prod_{j=1}^m g_n(u_j)  \right] \prod_{j=1}^m \theta_n(u_j)du_1 ... du_m \right]  \right| \nonumber \\
        &\leq \int_{[0,T]^m}   \left| \prod_{j=1}^m g_{n,p}(u_j) - \prod_{j=1}^m g_n(u_j)  \right| \cdot \left|\E\left[  \prod_{j=1}^m \theta_n(u_j)\right] \right| du_1 ... du_m \nonumber \\
        &\leq C \int_{[0,T]^m}   \left| \prod_{j=1}^m g_{n,p}(u_j) - \prod_{j=1}^m g_n(u_j)  \right| du_1 ... du_m \nonumber \\
        &\leq C \left( \int_{[0,T]^m}   \left( \prod_{j=1}^m g_{n,p}(u_j) - \prod_{j=1}^m g_n(u_j)  \right)^2 du_1 ... du_m \right)^{\frac{1}{2}}.
    \end{align}
    Where $C$ is some constant depending on $n, m, T$ and $d$, but independent of $p$ and in the last step we have used Cauchy-Schwarz's inequality. Now observe that 
    \begin{align*}
        \int_{[0,T]^m}   &\left( \prod_{j=1}^m g_{n,p}(u_j) - \prod_{j=1}^m g_n(u_j)  \right)^2 du_1 ... du_m = \int_{[0,T]} \prod_{j=1}^m g_{n,p}^2(u_j)du_1 ... du_m \\
        &+ \int_{[0,T]} \prod_{j=1}^m g_n^2(u_j)du_1 ... du_m - 2 \int_{[0,T]} \prod_{j=1}^m g_{n,p}(u_j) g_n(u_j)du_1 ... du_m \\
        &= \left(\int_{[0,T]}  g_{n,p}^2(u)du \right)^m + \left(\int_{[0,T]}  g_n^2(u)du \right)^m- 2\left(\int_{[0,T]}  g_{n,p}(u) g_n(u)du \right)^m, \\
    \end{align*}
    and that 
    \begin{align*}
        \left| \int_{[0,T]} g_{n,p}(u)g_n(u)du - \int_{[0,T]} g_n^2(u)du  \right| &\leq \int_{[0,T]} |g_{n,p}(u) - g_n(u)|\cdot |g_n(u)| du \\
        &\leq \left(\int_{[0,T]} (g_{n,p}(u) - g_n(u))^2 du \right)^{\frac{1}{2}} \left(\int_{[0,T]} g_n^2(u) du \right)^{\frac{1}{2}}.
    \end{align*}
    Given that the inclusion $L^{2q}([0,T]) \subset L^{2}([0,T])$ is continuous (that is, convergent sequences in $L^{2q}([0,T])$ will also converge to the same limit in $L^2([0,T])$), this last quantity goes to $0$ as $p$ approaches infinity and hence, since this convergence implies convergence of the $L^2([0,T])$ norms as well, we have that \eqref{volem veure que va a 0} converges to $0$ as $p$ approaches infinity as well.
\end{proof}

As the proof shows, the content of Lemma \ref{lema increment moments simples} remains true as long as $\left|\E\left[  \prod_{j=1}^m \theta_n(u_j)\right] \right| \leq C$ for some positive constant $C$ which might depend on $m$, $n$, $T$ and $d$. 

With all this, it only remains to show that \eqref{increment moments simple} holds for any simple function $g$.

\begin{proposition}
    Inequality \eqref{increment moments simple} holds for any simple function $g$ and for any $q > 1$.
\end{proposition}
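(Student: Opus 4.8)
The plan is to reduce \eqref{increment moments simple}, via the two lemmas already available, to a moment bound for the increments of the Kac--Stroock process $\zeta_n$ over small rectangles, and then to derive that bound from the explicit expression for the mixed moments of $\theta_n$ furnished by the Poisson structure. Concretely, fix the even integer $m$; I would first argue that it suffices to prove
\[
\E\!\left[\Big(\int_{(s,t]}\theta_n(u)\,du\Big)^{m}\right]\ \le\ C\prod_{i=1}^{d}(t_i-s_i)^{\frac{m}{2q}}
\]
for every $n$ and every rectangle $(s,t]\subset[0,T]$ with $0<s<t<2s$, $C$ independent of $n$ and of $(s,t]$. Since $\int_{(s,t]}\theta_n\,du=\Delta_s\zeta_n(t)$, Lemma~\ref{lema s<t<2s a tot 0<s<t} applied to $Z=\zeta_n$ with $\delta_i=\tfrac1{2q}\in(0,1)$ then upgrades this, with a new constant, to all $0\le s\le t\le T$, which is \eqref{increment moments simple} for $g=\I_{(s,t]}$. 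For a general simple $g=\sum_{\ell=1}^{l}c_\ell\I_{A_\ell}$ with the $A_\ell$ disjoint rectangles, I would expand
\[
\E\!\left[\Big(\int_{[0,T]}g\,\theta_n\,du\Big)^{m}\right]=\sum_{\ell_1,\dots,\ell_m}c_{\ell_1}\!\cdots c_{\ell_m}\int_{A_{\ell_1}\times\cdots\times A_{\ell_m}}\E\!\left[\prod_{i=1}^{m}\theta_n(u^i)\right]du^1\cdots du^m,
\]
control each integral by the estimate below, and reassemble the pieces using $\sum_\ell a_\ell^{m/2q}\le(\sum_\ell a_\ell)^{m/2q}$ (valid since $m>2q$) with $a_\ell=|c_\ell|^{2q}\lambda(A_\ell)$, together with $\|g\|_{2}\le C\|g\|_{2q}$ on $[0,T]$: the terms with all $A_{\ell_i}$ equal give $\sum_\ell|c_\ell|^m\lambda(A_\ell)^{m/2q}$, the rest being absorbed thanks to the exponential decay of the correlation.

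The analytic core is the mixed moment. Writing $\theta_n(u)=n^{d/2}\big(\prod_{j}u_j\big)^{(d-1)/2}(-1)^{N_n(u)}$ and $\prod_i(-1)^{N_n(u^i)}=(-1)^{\sum_i N_n(u^i)}$, I would decompose $\R_+^d$ into the level sets $B_S=\{x:\ x\le u^i\iff i\in S\}$, $S\subseteq\{1,\dots,m\}$, on which every point of $N_n$ is counted exactly $|S|$ times. Because $N_n$ puts independent Poisson$(n\lambda(\cdot))$ masses on disjoint sets, this gives
\[
\E\!\left[\prod_{i=1}^{m}\theta_n(u^i)\right]=n^{md/2}\Big(\prod_{i=1}^m\prod_{j=1}^d(u^i_j)^{\frac{d-1}{2}}\Big)\,\exp\!\Big(-2n\!\!\sum_{|S|\ \mathrm{odd}}\lambda(B_S)\Big),
\]
and, the deterministic weight being bounded on $[0,T]^m$, the problem reduces to integrating $n^{md/2}\exp\big(-2n\sum_{|S|\,\mathrm{odd}}\lambda(B_S)\big)$.

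On a rectangle $(s,t]$ with $0<s<t<2s$ one has $u^i_j\asymp s_j$, so $\lambda(B_{\{i,l\}})$ is comparable to the boundary layer $\sum_j|u^i_j-u^l_j|\prod_{j'\ne j}s_{j'}$, and since $m$ is even the quantity $\sum_{|S|\,\mathrm{odd}}\lambda(B_S)$ is bounded below by a sum of such layers that is strictly positive unless the $u^i$ coalesce at least into pairs. I would then split $(s,t]^m$ according to which of the $u^i$ lie within distance $1/n$ of one another, integrate out the ``slaved'' variables first — each integration producing a factor $\asymp(2n)^{-d}\big(\prod_j s_j\big)^{-(d-1)}$ which cancels the matching part of $n^{md/2}\big(\prod_j s_j\big)^{m(d-1)/2}$ — and integrate the remaining free variables over $(s,t]$ to obtain $\prod_j(t_j-s_j)^{m/2}$; since $t_j-s_j\le T_j$, this is $\le C\prod_j(t_j-s_j)^{m/(2q)}$, the bound needed above.

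The main obstacle is the combinatorics of the level sets $B_S$: producing a workable lower bound for $\sum_{|S|\,\mathrm{odd}}\lambda(B_S)$ in terms of the pairwise gaps between the $u^i$ (the $B_S$ being differences of intersections of $m$ orthants, their geometry is intricate), dealing with the configurations where three or more points coalesce so that they contribute only lower-order terms, and, when $d\ge2$, keeping the weight $\prod_{i,j}(u^i_j)^{(d-1)/2}$ under control uniformly in the rectangle — this last point being, as remarked after Theorem~\ref{main theorem}, what confines the argument to $q>1$. Once the rectangle estimate is in place, the assembly of the general simple-function case and the passage to all of $L^{2q}([0,T])$ are routine (the latter being exactly Lemma~\ref{lema increment moments simples}).
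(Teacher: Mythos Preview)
Your reduction to indicators and the ensuing reassembly in step~4 is where the argument breaks. The cross-terms
\[
c_{\ell_1}\cdots c_{\ell_m}\int_{A_{\ell_1}\times\cdots\times A_{\ell_m}}\E\Big[\prod_{i}\theta_n(u^i)\Big]\,du^1\cdots du^m
\]
with the $A_{\ell_i}$ not all equal are \emph{not} absorbed by any correlation decay you have established: to bound them you would need precisely the mixed-rectangle moment estimate you are trying to avoid. The increments $\Delta_{A_\ell}\zeta_n$ over disjoint rectangles are correlated in a complicated way (unlike for the Brownian sheet), and there is no black-box argument that turns the single-rectangle bound into the simple-function bound. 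A Minkowski approach fails too, since $\sum_\ell|c_\ell|\lambda(A_\ell)^{1/2}$ is not dominated by $\|g\|_{2q}$.

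The paper avoids this entirely by treating the simple function $g=\sum_j a_j\I_{A_j}$ (with the $A_j$ disjoint) from the outset. One expands the $m$-th moment as an integral over $(s,t]^m$ with $0<s<t<2s$, pulls the sum over $(j_1,\dots,j_m)$ inside, and observes that $\sum_{j_1,\dots,j_m}|a_{j_1}\cdots a_{j_m}|\prod_l\I_{A_{j_l}}(u^l)=\prod_l\tilde g(u^l)$ with $\tilde g=\sum_j|a_j|\I_{A_j}$. The known estimate (from \cite{multiparameterpoisson}) $|\E[(-1)^{\sum_j N_n(u^j)}]|\le\prod_i\exp\{-2nS_i\sum_{j}(u_i^{(2j)}-u_i^{(2j-1)})\}$, where $S_i=\prod_{l\ne i}s_l$, together with a symmetry-and-pairing argument, then yields $\E[(\int_{(s,t]}g\,\theta_n)^m]\le C(\int_{(s,t]}\tilde g^2)^{m/2}$; and since the $A_j$ are disjoint, $\tilde g^2=g^2$. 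A H\"older step converts this to $C\|g\|_{2q}^m\prod_i(t_i-s_i)^{m(q-1)/(2q)}$, after which Lemma~\ref{lema s<t<2s a tot 0<s<t} is applied to the process $Z(t)=\int_{[0,t]}g\,\theta_n$ with $\delta_i=(q-1)/(2q)$. This, incidentally, is where the restriction $q>1$ actually enters --- one needs $\delta_i>0$ --- and not in the control of the weight $\prod(u_j^i)^{(d-1)/2}$ as you suggested; that weight is harmless on $(s,t]$ with $0<s<t<2s$, since $u_j^i\asymp s_j$ there. Your level-set decomposition of $\R_+^d$ would also get you to the exponential bound, but it is more laborious than invoking the order-statistics estimate already available in the literature.
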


\begin{proof}
    Let $g(u) = \sum_{j=1}^k a_k \I_{A_j}(u)$ with $a_1,...,a_k \in \R$ and $A_1,...,A_k \subset [0,T]$ are disjoint rectangles
    \begin{equation*}
        A_j = (s_1^j, t_1^j] \times ... \times (s_d^j, t_d^j], \quad s_i^j < t_i^j.
    \end{equation*}
    Observe that if we fix $u^j = (u_1^j,...,u_d^j) \in [0,T], j=1,...,m$,
    \begin{equation*}
        \prod_{j=1}^m g(u^j) = \sum_{j_1,...,j_m} a_{j_1}... a_{j_m} \I_{A_{j_1}}(u^1) ... \I_{A_{j_m}}(u^m),
    \end{equation*}
    where, in the last sum, $1 \leq j_{l} \leq k$ for each $l \in \{1,...,m\}$. Moreover, for each $(j_1,...,j_m) \in \{1,...,k\}^m$, we have
    \begin{align}\label{factoritzacio}
        \I_{A_{j_1}}(u^1) ... \I_{A_{j_m}}(u^m) &= \left( \I_{(s_1^{j_1}, t_1^{j_1}]}(u_1^1) ... \I_{(s_d^{j_1}, t_d^{j_1}]}(u_d^1)  \right) \cdot... \cdot \left( \I_{(s_1^{j_m}, t_1^{j_m}]}(u_1^m) ... \I_{(s_d^{j_m}, t_d^{j_m}]}(u_d^m) \right) \nonumber \\
        &= \left( \I_{(s_1^{j_1}, t_1^{j_1}]}(u_1^1) ... \I_{(s_1^{j_m}, t_1^{j_m}]}(u_1^m)  \right) \cdot... \cdot \left( \I_{(s_d^{j_1}, t_d^{j_1}]}(u_d^1) ... \I_{(s_d^{j_m}, t_d^{j_m}]}(u_d^m) \right),
    \end{align}
    where in the last step we have rearranged the factors by components. 
    
    Now let us assume that $0 < s < t < 2s$, then we have that
    \begin{align}\label{primers càlculs}
        &\E\left[\left( \int_{[0,T]} g(u) \theta_n(u)du \right)^m \right] = \int_{(s,t]^m} \prod_{j=1}^m g(u^j) \E\left[ \prod_{j=1}^m \theta_n(u^j) \right] du^1 ... du^m \nonumber \\
        &= n^{m\frac{d}{2}} \int_{(s,t]^m} \prod_{j=1}^m g(u^j) \left( \prod_{\substack{1 \leq j \leq m \\ 1 \leq i \leq d}} u_i^j\right)^{\frac{d-1}{2}} \E\left[ (-1)^{\sum_{j=1}^m N_n(u^j)} \right] du^1 ... du^m \nonumber \\
        &= n^{m\frac{d}{2}} \sum_{j_1,...,j_m} a_{j_1}...a_{j_m} \int_{(s,t]^m} \prod_{l = 1}^m \I_{A_{j_l}}(u^l) \left( \prod_{\substack{1 \leq j \leq m \\ 1 \leq i \leq d}} u_i^j\right)^{\frac{d-1}{2}} \E\left[ (-1)^{\sum_{j=1}^m N_n(u^j)} \right] du^1 ... du^m \nonumber \\
        &\leq n^{m\frac{d}{2}} \sum_{j_1,...,j_m} |a_{j_1}...a_{j_m}| \int_{(s,t]^m} \prod_{l = 1}^m \I_{A_{j_l}}(u^l) \left( \prod_{\substack{1 \leq j \leq m \\ 1 \leq i \leq d}} u_i^j\right)^{\frac{d-1}{2}}\left|  \E\left[ (-1)^{\sum_{j=1}^m N_n(u^j)} \right] \right| du^1 ... du^m.
    \end{align}
    As shown in the proof of Lemma 3.2 in \cite{multiparameterpoisson}, one has the following estimate for the expect value inside the integral
    \begin{equation*}
        \left|  \E\left[ (-1)^{\sum_{j=1}^m N_n(u^j)} \right] \right| \leq \prod_{i=1}^d \exp\left\{ -2n S_i \sum_{j=1}^{m/2} \left(u_i^{(2j)} - u_i^{(2j-1)}\right)  \right\}, \quad S_i = \prod_{l\neq i} s_l,
    \end{equation*}
    where, for each $i \in \{1,...,d\}$, the variables $u_i^{(j)}$, $j \in \{1,...,m\}$ are the variables $u_i^j$ ordered increasingly. On the other hand, and due to the specific factorization shown in \eqref{factoritzacio}, one has that, for each $(j_1,...,j_m) \in \{1,...,k\}^m$,
    \begin{align*}
        &\int_{(s,t]^m} \prod_{l = 1}^m \I_{A_{j_l}}(u^l) \left( \prod_{\substack{1 \leq j \leq m \\ 1 \leq i \leq d}} u_i^j\right)^{\frac{d-1}{2}}  \prod_{i=1}^d \exp\left\{ -2n S_i \sum_{j=1}^{m/2} \left(u_i^{(2j)} - u_i^{(2j-1)}\right)  \right\} du^1 ... du^m \\
        &= (m!)^d \int_{(s,t]^m} \prod_{l = 1}^m \I_{A_{j_l}}(u^l) \left( \prod_{\substack{1 \leq j \leq m \\ 1 \leq i \leq d}} u_i^j\right)^{\frac{d-1}{2}}  \prod_{i=1}^d \exp\left\{ -2n S_i \sum_{j=1}^{m/2} \left(u_i^{2j} - u_i^{2j-1}\right)  \right\} \\
        &\times \prod_{i=1}^d \I_{\{u_i^1 \leq ... \leq u_i^m\}} du^1 ... du^m.
    \end{align*}
    So, all in all, \eqref{primers càlculs} can be bounded by
    \begin{align}\label{segons càlculs}
         &n^{m\frac{d}{2}} (m!)^d\sum_{j_1,...,j_m} |a_{j_1}...a_{j_m}| \int_{(s,t]^m} \prod_{l = 1}^m \I_{A_{j_l}}(u^l) \left( \prod_{\substack{1 \leq j \leq m \\ 1 \leq i \leq d}} u_i^j\right)^{\frac{d-1}{2}} \prod_{i=1}^d \exp\left\{ -2n S_i \sum_{j=1}^{m/2} \left(u_i^{2j} - u_i^{2j-1}\right)  \right\} \nonumber \\
         &\times \prod_{i=1}^d \I_{\{u_i^1 \leq ... \leq u_i^m\}} du^1 ... du^m  \nonumber \\
         &= n^{m\frac{d}{2}} (m!)^d  \int_{(s,t]^m} \left(\sum_{j_1,...,j_m} \prod_{l = 1}^m|a_{j_l}|\I_{A_{j_l}}(u^l) \right) \left( \prod_{\substack{1 \leq j \leq m \\ 1 \leq i \leq d}} u_i^j\right)^{\frac{d-1}{2}} \prod_{i=1}^d \exp\left\{ -2n S_i \sum_{j=1}^{m/2} \left(u_i^{2j} - u_i^{2j-1}\right)  \right\}  \nonumber \\
         &\times \prod_{i=1}^d \I_{\{u_i^1 \leq ... \leq u_i^m\}} du^1 ... du^m \nonumber \\
         &= n^{m\frac{d}{2}} (m!)^d  \int_{(s,t]^m}  \prod_{j=1}^m \Tilde{g}(u^j)\left( \prod_{\substack{1 \leq j \leq m \\ 1 \leq i \leq d}} u_i^j\right)^{\frac{d-1}{2}} \prod_{i=1}^d \exp\left\{ -2n S_i \sum_{j=1}^{m/2} \left(u_i^{2j} - u_i^{2j-1}\right)  \right\} \nonumber  \\
         &\times \prod_{i=1}^d \I_{\{u_i^1 \leq ... \leq u_i^m\}} du^1 ... du^m,
    \end{align}
    where $\Tilde{g}(u) = \sum_{l=1}^k |a_l|\I_{A_l}(u)$. 
    
    The following step is to observe that
    \begin{align*}
        &\prod_{i=1}^d \exp\left\{ -2nS_i \sum_{j=1}^{m/2} (u_{i}^{2j} - u_i^{2j-1})  \right\}
        = \prod_{j=1}^{m/2} \exp\left\{ -2n \sum_{i=1}^{d} S_i (u_{i}^{2j} - u_i^{2j-1})  \right\},\\
        &\prod_{j=1}^m |\Tilde{g}(u^j)| = \prod_{j=1}^{m/2} |\Tilde{g}(u^{2j-1})| |\Tilde{g}(u^{2j})|, \\
        &\prod_{\substack{1 \leq i \leq d \\ 1\leq j \leq m}} u_i^j = \prod_{j=1}^{m/2} \left(\prod_{i=1}^d u_i^{2j-1}u_i^{2j} \right), \\
        &\prod_{i=1}^d \I_{\{u_i^1 \leq ... \leq u_i^m\}} \leq \prod_{j=1}^{m/2}\left(\prod_{i=1}^d \I_{\{u_i^{2j-1} \leq u_i^{2j}\}}\right).
    \end{align*}
    So \eqref{segons càlculs} can be bounded by 
    \begin{align*}
        &n^{m\frac{d}{2}}(m!)^d \left( \int_{\prod_{i=1}^d (s_i,t_i]^2}|\Tilde{g}(x)||\Tilde{g}(y)| \prod_{i=1}^d \left[ \I_{\{x_i \leq y_i\}}(x_i y_i)^{\frac{d-1}{2}} \exp\left\{ -2n S_i(y_i - x_i)   \right\} \right]dx dy  \right)^{\frac{m}{2}} \\
        &\leq n^{m\frac{d}{2}} (m!)^d \Bigg( \frac{1}{2}\int_{\prod_{i=1}^d (s_i,t_i]^2} \Tilde{g}^2(x) \prod_{i=1}^d \left[ \I_{\{x_i \leq y_i\}}(x_i y_i)^{\frac{d-1}{2}} \exp\left\{ -2n S_i(y_i - x_i)   \right\} \right]dx dy \\
        & + \frac{1}{2} \int_{\prod_{i=1}^d (s_i,t_i]^2}\Tilde{g}^2(y) \prod_{i=1}^d \left[ \I_{\{x_i \leq y_i\}}(x_i y_i)^{\frac{d-1}{2}} \exp\left\{ -2n S_i(y_i - x_i)   \right\} \right]dx dy\Bigg)^{\frac{m}{2}},
    \end{align*}
    where in the last step we have used that $2ab \leq a^2 + b^2$.

    Now observe that
    \begin{align*}
        &\int_{\prod_{i=1}^d (s_i,t_i]^2} \Tilde{g}^2(x) \prod_{i=1}^d \left[ \I_{\{x_i \leq y_i\}}(x_i y_i)^{\frac{d-1}{2}} \exp\left\{ -2n S_i(y_i - x_i)   \right\} \right]dx dy \\
        &= \int_{(s,t]}\Tilde{g}^2(x) \left( \int_{(s,t]} \prod_{i=1}^d\I_{\{x_i \leq y_i\}}(x_i y_i)^{\frac{d-1}{2}} \exp\left\{ -2n S_i(y_i - x_i)   \right\} dy\right)dx \\
        &= \int_{(s,t]}\Tilde{g}^2(x) \left( \prod_{i=1}^d \int_{x_i}^{t_i}(x_i y_i)^{\frac{d-1}{2}} \exp\left\{ -2n S_i(y_i - x_i)   \right\} dy_i\right)dx,
    \end{align*}
    and that, for each $i \in \{1,...,m\}$
    \begin{align*}
        \int_{x_i}^{t_i}(x_i y_i)^{\frac{d-1}{2}} e^{ -2n S_i(y_i - x_i)}  dy_i &\leq 2^{d-1}s_i^{d-1} \int_{x_i}^{t_i} e^{-2nS_i (y_i - x_i)} dy_i \\
        &=  \frac{2^{d-1}s_i^{d-1}}{2n S_i} \left( 1 - e^{-2nS_i (t_i - x_i)}  \right) \\
        &\leq \frac{2^{d-2} s_i^d}{ n\prod_{l=1}^d s_l},
    \end{align*}
    where we have used that $0 < s_i \leq x_i, y_i \leq t_i < 2s_i \leq 2x_i, 2y_i$ for all $i \in \{1,...,d\}$. Hence,
    \begin{align*}
        \int_{\prod_{i=1}^d (s_i,t_i]^2} \Tilde{g}^2(x) &\prod_{i=1}^d \left[ \I_{\{x_i \leq y_i\}}(x_i y_i)^{\frac{d-1}{2}} \exp\left\{ -2n S_i(y_i - x_i)   \right\} \right]dx dy \\
        &\leq n^{-d}{2^{d(d-2)}} \prod_{i=1}^d \frac{s_i^d}{\prod_{l=1}^d s_l} \int_{(s,t]}\Tilde{g}^2(x)dx\\
        &= n^{-d}{2^{d(d-2)}} \int_{(s,t]}f^2(x)dx.
    \end{align*}
    Similarly, 
    \begin{equation*}
        \int_{\prod_{i=1}^d (s_i,t_i]^2}\Tilde{g}^2(y) \prod_{i=1}^d \left[ \I_{\{x_i \leq y_i\}}(x_i y_i)^{\frac{d-1}{2}} \exp\left\{ -2n S_i(y_i - x_i)   \right\} \right]dx dy \leq \frac{2^{d(d-2)}}{n^d} \int_{(s,t]}\Tilde{g}^2(y)dy,
    \end{equation*}
    But, since the $A_j$ in $g$ (and $\Tilde{g}$) are disjoint, we have that $\Tilde{g}^2 = g^2$, giving us the desired result over rectangles $[s,t]$ with $0 < s < t < 2s$. Now, from Hölder's inequality, we will have that
    \begin{align*}
        \E\left[\left( \int_{(s,t]} g(u) \theta_n(u)du \right)^m \right] &\leq C \left(\int_{(s,t]}g^2(u)du \right)^{\frac{m}{2}}\\
        &\leq C \left(\int_{(s,t]}|g(u)|^{2q}du \right)^{\frac{m}{2q} }\prod_{i = 1}^d (t_i - s_i)^{\frac{m (q-1)}{2q}} \\
        &\leq C\left(\int_{[0,T]}|g(u)|^{2q}du \right)^{\frac{m}{2q} }\prod_{i = 1}^d (t_i - s_i)^{\frac{m (q-1)}{2q}} 
    \end{align*}
    for all $0 < s < t < 2s$. Since $0 < \frac{q-1}{2q} < 1$ for all $q > 1$, we can apply Lemma \ref{lema s<t<2s a tot 0<s<t} to get the desired result.
\end{proof}

The reader might think that this can be extended to $q = 1$ by taking the limit $q$ approaches to $1$ from above. However, this fails due to the presence of the constant $\Tilde{C}$ in Lemma \ref{lema s<t<2s a tot 0<s<t}, which diverges as $q \to 1^+$.

The proof shown above holds for any $q> 1$, $d \geq 1$. However, for $d = 1$, we can go a step further and see that condition \eqref{increment moments} is verified for the Kac-Stroock kernels for any sequence $\{g_n\}_{n \in \mathbb{N}} \subset L^2([0,T])$ using a direct approach like in the Donsker case. 

\begin{proposition}
    If $d=1$, then condition \eqref{increment moments} holds for any even integer $m \in \mathbb{N}$ and for any sequence of functions $\{g_n\}_{n \in \mathbb{N}} \subset L^2([0,T])$.
\end{proposition}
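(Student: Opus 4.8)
The plan is to mimic the direct computation used for the Donsker kernels in Proposition~\ref{increment moments Donsker}, exploiting the fact that for $d=1$ we have $\theta_n(u)=\sqrt n\,(-1)^{N_n(u)}$ with $N_n$ a Poisson process of intensity $n$ on $[0,T]$. It suffices to prove, for a fixed even $m$ and every $g\in L^2([0,T])$, the bound $\E[(\int_0^T g(u)\theta_n(u)\,du)^m]\le C(\int_0^T g^2(u)\,du)^{m/2}$ with $C$ independent of $n$ and $g$, and then to apply it to each $g_n$. Since $\theta_n$ is bounded on $[0,T]$ for fixed $n$ and $g\in L^2([0,T])\subset L^1([0,T])$, Fubini's theorem applies and gives
\begin{equation*}
    \E\left[\left(\int_0^T g(u)\theta_n(u)\,du\right)^m\right]=n^{m/2}\int_{[0,T]^m}\prod_{i=1}^m g(u^i)\;\E\left[(-1)^{\sum_{i=1}^m N_n(u^i)}\right]du^1\cdots du^m .
\end{equation*}

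The key step is to evaluate the inner expectation exactly. Writing the order statistics $u^{(1)}\le\cdots\le u^{(m)}$ and decomposing $N_n(u^{(i)})=\xi_1+\cdots+\xi_i$ into the independent Poisson increments $\xi_l=N_n(u^{(l)})-N_n(u^{(l-1)})$, one obtains $\sum_{i=1}^m N_n(u^{(i)})=\sum_{l=1}^m(m-l+1)\xi_l$, so that, $m$ being even (hence $m-l+1$ odd exactly when $l$ is even),
\begin{equation*}
    \E\left[(-1)^{\sum_{i=1}^m N_n(u^i)}\right]=\prod_{l\ \mathrm{even}}\E\left[(-1)^{\xi_l}\right]=\exp\left\{-2n\sum_{j=1}^{m/2}\left(u^{(2j)}-u^{(2j-1)}\right)\right\}.
\end{equation*}
This is precisely the $d=1$ instance of the estimate from Lemma 3.2 of \cite{multiparameterpoisson} (with $S_i=1$), but now as an \emph{equality} valid on all of $[0,T]^m$, with no restriction of the form $0<s<t<2s$; this is where the one-dimensional case is genuinely simpler.

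With this formula in hand I would take absolute values (the exponential factor being nonnegative), use that the integrand $\prod_i|g(u^i)|\,\E[(-1)^{\sum_i N_n(u^i)}]$ is a symmetric function of $(u^1,\dots,u^m)$ to reduce the integral to $m!$ times the integral over the simplex $\{0\le u^1\le\cdots\le u^m\le T\}$, and then enlarge the region of integration to $\{u^1\le u^2\}\times\{u^3\le u^4\}\times\cdots\times\{u^{m-1}\le u^m\}$, which is legitimate because the integrand is nonnegative there and, once the full ordering is dropped, factorizes across the pairs $(u^{2j-1},u^{2j})$. This yields the bound $m!\,(nI_n)^{m/2}$ with $I_n=\int_{\{x\le y\}\subset[0,T]^2}|g(x)||g(y)|e^{-2n(y-x)}\,dx\,dy$. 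Finally, applying $2|g(x)||g(y)|\le g(x)^2+g(y)^2$ and integrating out one variable in each resulting term, using $\int_x^T e^{-2n(y-x)}\,dy\le\frac1{2n}$ and $\int_0^y e^{-2n(y-x)}\,dx\le\frac1{2n}$, gives $nI_n\le\frac12\int_0^T g^2(u)\,du$, whence
\begin{equation*}
    \E\left[\left(\int_0^T g(u)\theta_n(u)\,du\right)^m\right]\le\frac{m!}{2^{m/2}}\left(\int_0^T g^2(u)\,du\right)^{m/2},
\end{equation*}
with $C=m!/2^{m/2}$ independent of $n$ and $g$; applying this to $g=g_n$ finishes the proof. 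There is no substantial obstacle here beyond the bookkeeping in the pairing argument: the content is entirely in the exact evaluation of the sign-expectation, which in $d=1$ removes the need to pass through Lemma~\ref{lema s<t<2s a tot 0<s<t} and its $q$-dependent constant.
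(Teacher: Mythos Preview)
Your proposal is correct and follows essentially the same approach as the paper's proof: expand the $m$th moment via Fubini, compute the sign-expectation explicitly using independence of Poisson increments (the paper writes $(-1)^{\sum_j N_n(u_j)}=(-1)^{\sum_j(N_n(u_{2j})-N_n(u_{2j-1}))}$ directly on the simplex, while you use the equivalent multiplicity argument $\sum_i N_n(u^{(i)})=\sum_l(m-l+1)\xi_l$), reduce to $m!$ times the simplex integral, relax to the product of pair constraints, factorize, apply $2ab\le a^2+b^2$, and integrate out the exponential to pick up the factor $\tfrac{1}{2n}$. The steps and the final constant $m!/2^{m/2}$ coincide with the paper's.
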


\begin{proof}
    As in the multiparameter set case, we first note that
    \begin{equation}\label{primers calculs d=1}
        \E\left[ \left( \int_s^t g_n(u)\theta_n(u)du  \right)^m \right] = n^{\frac{m}{2}}m! \int_{(s,t]^m} g_n(u_1)\cdot ... \cdot g_n(u_m) \I_{\{u_1 \leq ... \leq u_m\}} \E\left[ (-1)^{\sum_{j=1}^m N_n(u_j)} \right]du_1 ... du_m.
    \end{equation}
    Noticing that
    \begin{equation*}
        (-1)^{\sum_{j=1}^m N_n(u_j)} = (-1)^{(N_n(u_m) - N_n(u_{m-1})) + ... + (N_n(u_2) - N_n(u_1)) }
    \end{equation*}
    and using that $\E\left[ (-1)^Z \right] = e^{-2\lambda }$ if $Z$ is a Poisson random variable of parameter $\lambda$, we obtain that
    \begin{equation*}
         \E\left[ (-1)^{\sum_{j=1}^m N_n(u_j)} \right] = \exp\left\{ -2n \sum_{j=1}^{m/2} (u_{2j} - u_{2j-1}) \right\},
    \end{equation*}
    whenever $u_1 \leq ... \leq u_m$. Moreover, if we use the estimate 
    \begin{equation*}
        \I_{\{u_1 \leq ... \leq u_m\}} \leq \I_{\{u_1 \leq u_2\}} \cdot ... \cdot \I_{\{u_{m-1} \leq u_m\}} = \prod_{j=1}^{m/2} \I_{\{u_{2j-1} \leq u_{2j}\}},
    \end{equation*}
    we can see that, all in all, \eqref{primers calculs d=1} can be bounded by
    \begin{align*}
        & n^{\frac{m}{2}}m! \int_{(s,t]^m} \prod_{j=1}^{m/2} \left[ |g_n(u_{2j-1})||g_n(u_{2j})| \I_{\{u_{2j-1} \leq u_{2j}\}} e^{-2n(u_{2j} - u_{2j-1})} \right] du_1...du_m \\
        &= n^{\frac{m}{2}}m! \left( \int_{(s,t]^2} |g_n(x)||g_n(y)| \I_{\{x \leq y\}} e^{-2n(y-x)}dxdy  \right)^{\frac{m}{2}} \\
        &\leq n^{\frac{m}{2}}m! \left( \frac{1}{2} \int_{(s,t]^2} g_n^2(x) \I_{\{x \leq y\}} e^{-2n(y-x)}dxdy  + \frac{1}{2} \int_{(s,t]^2} g_n^2(y) \I_{\{x \leq y\}} e^{-2n(y-x)}dxdy  \right)^{\frac{m}{2}},
    \end{align*}
    where, in the last step, we have used that $2ab \leq a^2 + b^2$.

    Finally, we see that
    \begin{align*}
        \int_{(s,t]^2} g_n^2(x) \I_{\{x \leq y\}} e^{-2n(y-x)}dxdy &= \int_s^t g_n^2(x) \left( \int_s^t  \I_{\{x \leq y\}} e^{-2n(y-x)} dy \right)dx \\
        &= \int_s^t g_n^2(x) \left( \int_x^t  e^{-2n(y-x)} dy \right)dx \\
        &= \frac{1}{2n} \int_s^t g_n^2(x) \left(1 - e^{-2n(t-x)}   \right) dx \\
        &\leq \frac{1}{2n} \int_s^t g_n^2(x)dx
    \end{align*}
    and, similarly,
    \begin{align*}
         \int_{(s,t]^2} g_n^2(y) \I_{\{x \leq y\}} e^{-2n(y-x)}dxdy &= \int_s^t g_n^2(y) \left( \int_s^y e^{2n(x-y)}dx  \right)dy \\
         &= \frac{1}{2n} \int_s^t g_n^2(y)\left( 1- e^{-2n(y-s)} \right)dy \\
         &\leq \frac{1}{2n} \int_s^t g_n^2(y)dy,
    \end{align*}
    which finishes the proof.
\end{proof}

\appendix

\section{Appendix}

In this appendix, we give a proof of the convergence in law of the processes $\zeta_n$ defined as in \eqref{continuous random walk} towards the Brownian sheet in the space $\mathcal{C}([0,T])$ when the random variables $\{Z_k\}_{k \in \mathbb{N}^d}$ are centered with unitary variances and have finite moments of order $4$.

\begin{theorem}
    If the centered random variables $\{Z_k\}_{k \in \mathbb{N}^d}$ are independent and identically distributed with unitary variances and finite moments of order $4$, then the processes $\zeta_n$ defined as in \eqref{continuous random walk} converge towards the Brownian sheet in $\mathcal{C}([0,T])$ as $n$ approaches infinity.
\end{theorem}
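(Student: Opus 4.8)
The plan is to prove the convergence in law of $\zeta_n$ to the Brownian sheet in $\mathcal{C}([0,T])$ by the standard two-step strategy: first establish tightness of the laws of $\{\zeta_n\}_{n \in \mathbb{N}}$ in $\mathcal{C}([0,T])$, and then show that the finite dimensional distributions of $\zeta_n$ converge to those of the Brownian sheet. For the finite dimensional distributions, I would observe that $\zeta_n(t) = \int_{[0,t]} \theta_n(u)\,du = J^n(\I_{[0,t]})$ in the notation of the proof of Theorem \ref{main theorem}, and more generally any linear combination $\sum_{j=1}^k a_j \zeta_n(t_j)$ equals $J^n\big(\sum_{j=1}^k a_j \I_{[0,t_j]}\big)$. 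Since Proposition \ref{increment moments Donsker} gives condition \eqref{increment moments} with $q=1$, $m=4$ for the Donsker kernels (using the finite fourth moments), Lemma \ref{lema de densitat} applies with $F = L^2([0,T])$: the bound $\sup_n \E[|J^n(g)|] \leq C\|g\|_2$ holds by Hölder, the isometry bound holds for $J$, and on the dense subspace of simple functions $g = \sum_l g_l \I_{(s_{l-1},s_l]}$ the random variable $J^n(g) = \sum_l g_l \int_{(s_{l-1},s_l]} \theta_n(u)\,du$ is a linear combination of the increments $n^{-d/2}\sum_{k}Z_k(\cdots)$, which converge in law to the corresponding Gaussian vector by the multidimensional Lindeberg--Lévy central limit theorem (the $Z_k$ are i.i.d., centered, unit variance, and the correction terms coming from the non-integer part in \eqref{continuous random walk} are asymptotically negligible). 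Hence $J^n(g) \to J(g)$ in law for all $g \in L^2([0,T])$, which gives the convergence of all finite dimensional distributions of $\zeta_n$ to those of $\int_{[0,t]} f(u)W(du)$ with $f \equiv 1$, i.e. to the Brownian sheet.

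For tightness, I would invoke Theorem \ref{tightness Bickel-Wichura}. We need $\beta > 1$, $\gamma > 0$ and finite measures $\mu_n \to \mu$ with continuous marginals such that $\E[|\Delta_s \zeta_n(t)|^\gamma] \leq (\mu_n((s,t]))^\beta$. Taking $\gamma = 4$ and $\mu_n = \mu = \lambda$ (Lebesgue measure on $[0,T]$, up to a constant factor), the required inequality $\E[|\Delta_s \zeta_n(t)|^4] \leq C\big(\prod_{i=1}^d(t_i - s_i)\big)^2$ is exactly condition \eqref{increment moments} applied to $g_n = \I_{(s,t]}$, which holds by Proposition \ref{increment moments Donsker} with $m=4$, $q=1$ since $\int_{[0,T]} \I_{(s,t]}^2(u)\,du = \prod_{i=1}^d(t_i-s_i)$. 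As $\beta = 2 > 1$, the hypotheses of Theorem \ref{tightness Bickel-Wichura} are met and the sequence of laws is tight; moreover $\zeta_n$ vanishes along the axes by construction, as required.

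Combining tightness with convergence of the finite dimensional distributions yields the weak convergence of $\zeta_n$ to the Brownian sheet in $\mathcal{C}([0,T])$, which is the assertion. I expect the main obstacle to be the finite dimensional distribution step, specifically controlling the extra term $n^{-d/2}\sum_{k}Z_k\big(\int_{[0,nt]\setminus[0,[nt]]}\I_{[k-1,k)}(u)\,du\big)$ in \eqref{continuous random walk}: one must check that it converges to zero in probability (e.g. by a second-moment estimate, since there are $O(n^{d-1})$ relevant indices $k$, each contributing a bounded-variance term multiplied by $n^{-d/2}$, so the variance is $O(n^{-1}) \to 0$), so that $\zeta_n(t)$ has the same limit in law as the pure partial-sum process $n^{-d/2}\sum_{k\leq[nt]}Z_k$; the multidimensional CLT for the latter, together with the computation of the limiting covariance $\prod_{i=1}^d (s_i \wedge t_i)$, then finishes the argument. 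Everything else is a direct application of results already established in the paper.
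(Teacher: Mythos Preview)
Your proposal is correct and follows essentially the same route as the paper: tightness via Proposition \ref{increment moments Donsker} with $m=4$, $q=1$ (the paper phrases this as ``inspection of the proof of Theorem \ref{main theorem} with $f_n=1$''), and finite dimensional distributions via a second-moment bound on the correction term $R_n(t)$ (the paper gets $\E[R_n^2(t)] \leq n^{-d}\big(\prod_j([nt_j]+1)-\prod_j[nt_j]\big) \to 0$, matching your $O(n^{d-1})\cdot n^{-d}$ heuristic) followed by a multiparameter CLT for the pure partial sums $n^{-d/2}\sum_{k\leq[nt]}Z_k$ (the paper quotes Lemma 4.2.2 of \cite{Khoshnevisan2002}).

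One minor remark: your invocation of Lemma \ref{lema de densitat} for the finite dimensional distributions is superfluous. The function $g=\sum_{j=1}^k a_j \I_{[0,t_j]}$ you need is \emph{already} simple, so the density step buys nothing; the entire content of the fdd argument is the CLT for linear combinations of increments of $\zeta_n$ over rectangles, which you correctly isolate as the ``main obstacle'' and handle exactly as the paper does.
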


\begin{proof}
    Inspection of the proof of Theorem \eqref{main theorem} shows that the sequence is tight by taking $f_n = 1$ for all $n \in \mathbb{N}$, $q = 1$ and $m = 4$. So it only remains to show that the finite dimensional distributions of the processes $\zeta_n$ converge to the ones of the Brownian sheet. To this purpose, for each $t \in [0,T]$, let
    \begin{equation*}
        R_n(t) = n^{-\frac{d}{2}} \sum_{k \in \mathbb{N}^d} Z_k \left(\int_{[0,nt]\backslash [0,[nt]]} \I_{[k-1,k)}(u)du \right)
    \end{equation*}
    and note that $\E\left[R_n^2(t) \right]$ converges to $0$ as $n$ approaches infinity. Indeed, if $t_i = 0$ for some $i \in \{1,...,d\}$, then the result follows immediately. If $t > 0$, then we have that
    \begin{align*}
        \E\left[R_n^2(t) \right] &= n^{-d} \sum_{k \in \mathbb{N}^d} \left( \int_{[0,nt]\backslash [0,[nt]]} \I_{[k-1,k)}(u)du\right)^2 \\
        &\leq n^{-d} \sum_{k \in \Delta_n} \left( \int_{[0,nt]\backslash [0,[nt]]} \I_{[k-1,k)}(u)du\right)^2 \\
        &\leq  n^{-d} \sum_{k \in \Delta_n} 1 \\
        &= n^{-d} |\Delta_n|,
    \end{align*}
    where $\Delta_n \subset \mathbb{N}^d$ is the set of points $k \in \mathbb{N}^d$ such that $[k-1,k)\subset [0,[nt]+1]\backslash [0,[nt]]$, $|\Delta_n|$ is the number of elements in $\Delta_n$ and have used that $[k-1,k)$ has unitary Lebesgue measure. By definition of $\Delta_n$, we have
    \begin{equation*}
        |\Delta_n| = \prod_{j=1}^d \left( [nt_j]+1 \right) - \prod_{j=1}^d [nt_j],
    \end{equation*}
    so, for $n$ large enough so that $nt_j \geq 1$ for each $j \in \{1,...,d\}$,
    \begin{align*}
        \E\left[R_n^2(t) \right] &\leq n^{-d} \left( \prod_{j=1}^d \left( [nt_j]+1 \right) - \prod_{j=1}^d [nt_j] \right) \\
        &= n^{-d}\prod_{j=1}^d [nt_j]\left( \prod_{j=1}^d \frac{[nt_j]+1}{[nt_j]}  -1\right) \\
        &= \left(\prod_{j=1}^d T_j\right)\left( \prod_{j=1}^d \frac{[nt_j]}{nT_j}  \right) \left( \prod_{j=1}^d \frac{[nt_j]+1}{[nt_j]}  -1 \right)\\
        &\leq \left(\prod_{j=1}^d T_j\right)  \left( \prod_{j=1}^d \frac{[nt_j]+1}{[nt_j]}  -1 \right),
    \end{align*}
    where we have used that $[nt_j] \leq nt_j \leq nT_j$ for each $j \in \{1,...,d\}$. Since $x \mapsto \frac{x+1}{x}$ decreases $(0,\infty)$, it follows that
    \begin{equation*}
        \frac{[nt_j]+1}{[nt_j]} \leq \frac{[n \min_i t_i] + 1}{[n \min_i t_i]} \leq \frac{n \min_{i} t_i}{n \min_i t_i - 1} \xrightarrow{n \to \infty} 1
    \end{equation*}
    where we have used that $x - 1 < [x] \leq x$ per a tot $x \geq 0$. Thus,
    \begin{equation*}
         \E\left[R_n^2(t) \right] \leq \left(\prod_{j=1}^d T_j\right)  \left( \prod_{j=1}^d \frac{[nt_j]+1}{[nt_j]}  -1 \right) \leq \left(\prod_{j=1}^d T_j\right)  \left[ \left(\frac{n \min_{i} t_i}{n \min_i t_i - 1}\right)^d  -1 \right] \xrightarrow{n \to \infty}0.
    \end{equation*}
    Hence, if we set $S_n(t) = n^{-\frac{d}{2}} \sum_{k \leq [nt]} Z_k$, Chebyshev's inequality will imply that the finite dimensional distributions of $\zeta_n$ and $S_n$ have the same limit. So it only remains to show that the finite dimensional distributions of $S_n$ converge to the ones of the Brownian sheet. \\
    By the Cramér-Wold device, we only need to show that, for each $m \in \mathbb{N}$, $t^1, ..., t^m \in [0,T]$ and each $\alpha_1,...,\alpha_m \in \R$, the sums $Y_n = \sum_{j=1}^m \alpha_j S_n(t^j)$ converge in law to the sum $\sum_{j=1}^m \alpha_j W(t^j)$. Given that $W$ is a centered Gaussian process with covariance function $C(s,t) = \prod_{i=1}^d (t_i \wedge s_i)$, we have that the latter is a centered Gaussian random variable with variance
    \begin{equation}\label{variancia}
        \E\left[ \left( \sum_{j=1}^m \alpha_j W(t^j)  \right)^2 \right] = \sum_{i,j=1}^m \alpha_i \alpha_j \E\left[W(t^i) W(t^j) \right] = \sum_{i,j=1}^m \alpha_i \alpha_j \prod_{l=1}^d (t_l^i \wedge t_l^j).
    \end{equation}
    Now let $a_n(s) = \sum_{j=1}^m \alpha_j \I_{[0,[nt^j]]}(s)$, $s \in \mathbb{N}^d$, and observe that 
    \begin{equation*}
        \sup_{n \geq 1} \sup_{s \in \mathbb{N}^d} |a_n(s)| \leq \sum_{j=1}^m |\alpha_j| < \infty,
    \end{equation*}
    and that
    \begin{align*}
        n^{-d}\sum_{s \leq [nT]} a_n^2(s) &= n^{-d} \sum_{s \leq [nT]} \left( \sum_{i,j=1}^m \alpha_i \alpha_j \I_{[0,[nt^i]]}(s)\I_{[0,[nt^j]]}(s)\right) \\
        &= \sum_{i,j=1}^m \alpha_i \alpha_j \left( n^{-d} \sum_{s \leq [nT]} \I_{[0,[nt^i] \wedge [nt^j]]}(s) \right) \\
        &= \sum_{i,j=1}^m \alpha_i \alpha_j \prod_{l=1}^d \left(\frac{1}{n} \sum_{s_l \leq [nt_l^i]\wedge[nt_l^j]} \I_{[0,1]}\left( \frac{s_l}{[nt_l^i]\wedge [nt_l^j]}\right) \right).  
    \end{align*}
    Since, for each $i,j \in \{1,...,m\}$ and $l \in \{1,...,d\}$,
    \begin{equation*}
        \frac{1}{n} \sum_{s_l \leq [nt_l^i]\wedge[nt_l^j]} \I_{[0,1]}\left( \frac{s_l}{[nt_l^i]\wedge [nt_l^j]}\right) \xrightarrow{n \to \infty} \int_0^{t_l^i \wedge t_l^j} du = t_l^i \wedge t_l^j,
    \end{equation*}
    we can conclude, by Lemma 4.2.2 in \cite{Khoshnevisan2002}, that the finite dimensional distributions of $S_n$ converge to the ones of the Brownian motion, finishing the proof.
\end{proof}

\end{document}